\documentclass[reqno]{amsart}
\usepackage{bbold}
\usepackage{amsmath}
\usepackage{amssymb}
\usepackage{amsfonts}

\newcommand{\abs}[1]{\vert #1 \vert}

\newcommand{\norm}[1]{\left\Vert #1 \right\Vert}

\newcommand{\biggnorm}[1]{\biggl\Vert #1 \biggr\Vert}

\newcommand{\C}{\mathbb{C}}
\newcommand{\N}{\mathbb{N}}

\newcommand{\R}{\mathbb{R}}

\DeclareMathOperator{\im}{Im}
\DeclareMathOperator{\re}{Re}

\newtheorem{theorem}{Theorem}

\newtheorem{lemma}{Lemma}

\theoremstyle{definition}
\newtheorem{definition}{Definition}

\theoremstyle{remark}
\newtheorem{remark}{Remark}


\title[Global existence for nonlinear Dirac equations]{Global existence in the critical space for the Thirring and Gross-Neveu models coupled with the electromagnetic field}


\author[S.~Selberg]{Sigmund Selberg}

\address{Department of Mathematics, University of Bergen, PO Box 7803, 5020 Bergen, Norway}
\email{sigmund.selberg@uib.no}
\date{}

\thanks{The author thanks Jean-Christophe Merle for his hospitality during the author's visit to the University of Vechta, where the main part of the research reported here was carried out.}

\begin{document}

\begin{abstract}
We prove global well-posedness for the coupled Maxwell-Dirac-Thirring-Gross-Neveu equations in one space dimension, with data for the Dirac spinor in the critical space $L^2(\R)$. In particular, we recover earlier results of Candy and Huh for the Thirring and Gross-Neveu models, respectively, without the coupling to the electromagnetic field, but the function spaces we introduce allow for a greatly simplified proof. We also apply our method to prove local well-posedness in $L^2(\R)$ for a quadratic Dirac equation, improving an earlier result of Tesfahun and the author.
\end{abstract}

\maketitle

\section{Introduction}

We consider the following nonlinear Dirac equations on the Minkowski space-time $\R^{1+1}$.
\begin{enumerate}
\item\label{TM} Thirring model:
\[
  (-i\gamma^\mu \partial_\mu + m) \psi = \lambda (\overline \psi \gamma^\mu \psi) \gamma_\mu \psi;
\]

\item\label{GN} Gross-Neveu model (known as the Soler model in higher dimensions)
\[
  (-i\gamma^\mu \partial_\mu + m) \psi = \lambda (\overline \psi \psi) \psi;
\]

\item\label{MD} Maxwell-Dirac equations:
\begin{align*}
  (-i\gamma^\mu \partial_\mu + m) \psi &= \lambda A_\mu \gamma^\mu \psi,
  \\
  \square A_\mu &= - \lambda \overline{\psi} \gamma_\mu \psi, \qquad (\square = \partial^\mu \partial_\mu )
  \\
  \partial^\mu A_\mu &= 0;
\end{align*}

\item\label{MDT} Maxwell-Dirac-Thirring-Gross-Neveu equations:
\begin{align*}
  (-i\gamma^\mu \partial_\mu + m) \psi &= \lambda_1 A_\mu \gamma^\mu \psi + \lambda_2 (\overline \psi \gamma^\mu \psi) \gamma_\mu \psi + \lambda_3 (\overline \psi \psi) \psi,
  \\
  \square A_\mu &= - \lambda_1 \overline{\psi} \gamma_\mu \psi,
  \\
  \partial^\mu A_\mu &= 0;
\end{align*}

\item\label{DKG} Dirac-Klein-Gordon equations:
\begin{align*}
  (-i\gamma^\mu \partial_\mu + m) \psi &= \lambda \phi \psi,
  \\
  (\square + M^2)\phi &= \lambda \overline\psi \psi;
\end{align*}

\item\label{QD} Quadratic Dirac equation:
\[
  (-i\gamma^\mu \partial_\mu + m) \psi = Q(\psi).
\]
\end{enumerate}

Here $\psi = (u,v)^\intercal$ is the Dirac spinor field, which takes values in $\C^2$, the $A_\mu$ are the components of the electromagnetic field and $\phi$ is a real scalar field. The complex conjugate transpose is denoted $\psi^*$  and $\overline \psi = \psi^*\gamma^0$ is the adjoint spinor. The coupling constants $\lambda,\lambda_j$ are assumed to be real, and $m,M \ge 0$ are mass constants. The equations are written in covariant form on $\R^{1+1}$ with coordinates $x^\mu$ ($\mu=0,1$) and metric $(g^{\mu\nu})=\mathrm{diag}(1,-1)$, where $x^0=t$ is time and $x^1=x$ is spatial position, and we write $\partial_\mu = \partial/\partial x^\mu$, so $\partial_0=\partial_t$, $\partial_1=\partial_x$ and $\square = \partial_t^2-\partial_x^2$. The $2 \times 2$ Dirac matrices $\gamma^\mu$ satisfy
\[
  \gamma^\mu \gamma^\nu + \gamma^\nu \gamma^\mu = 2g^{\mu\nu} I \qquad (g^{00}=1, g^{11}=-1, g^{01}=g^{10}=0)
\]
and
\[
  (\gamma^0)^* = \gamma^0, \qquad (\gamma^1)^* = -\gamma^1.
\]
We adopt the representation
\[
  \gamma^0 =
  \begin{pmatrix}
    0 & 1  \\
    1 &0
  \end{pmatrix},
  \qquad
  \gamma^1=
  \begin{pmatrix}
    0 & -1  \\
    1 & 0
  \end{pmatrix}.
\]

The well-posedness of the Cauchy problem for the above models with initial data in Sobolev spaces $H^s(\R) = (1-\partial_x^2)^{-s/2} L^2(\R)$ has been studied by many authors. Our main interest here is the coupled system (iv). Before describing our results, and earlier results, we take a look a the scaling behaviour of the equations, which by standard heuristics indicates a lower bound on the regularity required for well-posedness.

For \eqref{TM} and \eqref{GN}, the scale invariant data space (in the massless case) for the spinor is $H^0=L^2$. For \eqref{MD}, \eqref{DKG} and \eqref{QD}, on the other hand, the scale invariant $H^s$ regularity for the spinor is lower, namely $s=-1/2$ for \eqref{QD} and $s=-1$ for \eqref{MD} and \eqref{DKG} (for the electromagnetic and scalar fields, the scaling regularity is $s=-1/2$).

Thus, based on scaling alone, one would expect that for \eqref{TM} and \eqref{GN}, and hence also \eqref{MDT}, well-posedness fails for spinor data in $H^s$ with $s < 0$, while in $H^0=L^2$ one may hope to have global well-posedness for small-norm data. Moreover, taking into account the fact that the $L^2$ norm of the spinor is a priori conserved (this is the conservation of charge, discussed below), one may speculate that the assumption of small data norm can be removed, and indeed this has been verified for \eqref{TM} and \eqref{GN}: Global $L^2$ well-posedness was proved by Candy \cite{Candy2011} for the Thirring model (see also \cite{Huh2011} for the massless case), and by Huh \cite{Huh2015} for the Gross-Neveu model (see also \cite{Zhang2015}). Earlier global results, with higher regularity, are due to Delgado \cite{Delgado1978} for the Thirring model (and the similar Federbush model), and to Huh \cite{Huh2013} for the Gross-Neveu model.

Delgado \cite{Delgado1978} made the important observation that in the Thirring model, the absolute squares of the spinor components satisfy Dirac type equations with nonlinear terms which are also quadratic in the spinor components. Solving by the method of characteristics and applying Gr\"onwall's lemma, he then obtained an a priori $L^\infty$ bound on the spinor, which can be used to prove global existence in $H^s$ when $s > 1/2$, since then $H^s$ embeds into $L^\infty$. Delgado used the same trick for the Maxwell-Dirac and Dirac-Klein-Gordon equations, for which global existence in $H^1$ had been proved by Chadam \cite{Chadam1973}, and also for the Thirring model coupled to the electromagnetic field. However, Delgado's trick does not work directly for the Gross-Neveu model, since the first-order equations satisfied by the absolute squares of the spinor components then contain quadrilinear terms and not just quadratic ones. Huh \cite{Huh2013} was nevertheless able to salvage Delgado's trick by using integrating factors to effectively get rid of the highest order terms, and using the conservation of charge to bound the integrating factors.

Global $L^2$ (for the spinor) well-posedness for the Dirac-Klein-Gordon equations \eqref{DKG} has been proved by Bournaveas \cite{Bournaveas2000}, but this result relies strongly on the null structure of those equations and does not apply to the Maxwell-Dirac equations \eqref{MD}, which have a weaker null structure. The global result of Bournaveas has been extended to a range of negative Sobolev regularities for the spinor (see \cite{Selberg2007, Tesfahun2009, Candy2013}), and for local well-posedness the range of admissible Sobolev regularity has been completely determined by Machihara, Nakanishi and Tsugawa \cite{Machihara2010} (see also \cite{Machihara2016}).

For the Maxwell-Dirac equations \eqref{MD}, local well-posedness with spinor data in $H^s$, $s > 0$, has been proved by Okamoto \cite{Okamoto2013}. Huh \cite{Huh2010} obtained a global $L^2$ result in the massless case, using the interesting fact that there is then an explicit representation of the solution in terms of the initial data. Bachelot \cite{Bachelot2006} developed in an abstract setting a method to prove global existence and uniqueness for nonlinear and nonlocal hyperbolic systems via an iteration scheme involving the resolvent of the nonlinear part of the equation, and obtained in particular some global existence results for the Maxwell-Dirac and Maxwell-Dirac-Thirring equations with $L^2$ data for the spinor. You and Zhang \cite{Zhang2014} have proved the existence and uniqueness of global weak charge class solutions using Chadam's global existence result and a compactness argument. However, the results of Huh, Bachelot and You-Zhang all have in common that the the electromagnetic potential or its time derivative (or both) are not shown to remain in the space in which the initial data are taken, so they are not well-posedness results. The result that we prove here for \eqref{MDT} applies of course also to Maxwell-Dirac without the Thirring or Gross-Neveu self-interactions, and thus provides the first true global well-posedness result for Maxwell-Dirac with $L^2$ data for the spinor.

For further results on nonlinear Dirac equations in one space dimension, we refer to \cite{Dias1986, Bournaveas2008, Machihara2007, Naumkin2016, Naumkin2016b, Boussaid2016, Contreras2016}. Well-posedness of nonlinear Dirac equations have also been extensively studied in higher dimensions; see \cite{Bournaveas2016, Bejenaru2015, Wang2015, Pecher2014, Ikeda2013, Selberg2011, Grunrock2010, Selberg2010b, Selberg2007} and the references therein.

The proofs of $L^2$ well-posedness for the Thirring and Gross-Neveu models in \cite{Candy2011} and \cite{Huh2015} are based on the method of null-coordinate Sobolev product norms introduced in \cite{Machihara2010}. This creates some technical difficulties since one must localise in both space and time. We choose a different route which greatly simplifies the proof, working instead directly on a time slab $\R \times [0,T]$ with space-time norms motivated by the local form of the conservation of charge, which is discussed next. Our method allows us to easily treat the Thirring and Gross-Neveu models coupled to the electromagnetic field, i.e., the model \eqref{MDT}.

The rest of the paper is organised as follows. In the next section we recall the charge conservation. Our main results are presented in section \ref{MR}. In sections \ref{FS}--\ref{EM} we introduce the function spaces and prove linear and nonlinear estimates. In the remaining sections we then prove the local and global well-posedness results.

\section{Charge conservation}

The models \eqref{TM}--\eqref{DKG} all enjoy the conservation law $\partial_\mu j^\mu = 0$, where $j^\mu = \overline{\psi}\gamma^\mu\psi$ is the Dirac charge density. Since $\psi = (u,v)^\intercal$ we write this as
\[
  \partial_t \rho + \partial_x j = 0,
  \qquad \rho = \abs{\psi}^2 = \abs{u}^2+\abs{v}^2, \qquad j = \overline{\psi} \gamma^1 \psi = \abs{u}^2 - \abs{v}^2.
\]
Integrating this over a time slab $\R \times [0,t]$ gives conservation of total charge
\begin{equation}\label{ChargeConservation}
  \int_{\R} \left( \abs{u(x,t)}^2 + \abs{v(x,t)}^2 \right)\, dx = \int_{\R} \left(\abs{u(x,0)}^2 + \abs{v(x,0)}^2 \right) \, dx
\end{equation}
for a sufficiently regular solution decaying at spatial infinity. On the other hand, integrating over a truncated backward cone $(\R \times [0,t]) \cap \Omega(x_0,t_0)$, where
\[
  \Omega(x_0,t_0) = \left\{ (y,s) \in \R^2 \colon 0 \le s \le t_0, \;\; x_0-(t_0-s) \le y \le x_0 + t_0-s \right\},
\]
gives
\begin{multline}\label{LocalCharge}
  \int_{x_0-t_0+t}^{x_0+t_0-t} \rho(y,t) \, dy
  + \int_0^t (\rho+j)(x_0+t_0-s,s) \, ds
  \\
   + \int_0^t (\rho-j)(x_0-t_0+s,s) \, ds
  = \int_{x_0-t_0}^{x_0+t_0} \rho(y,0) \, dy
\end{multline}
for $0 \le t \le t_0$. Since
\[
  \rho+j=2\abs{u}^2, \qquad \rho-j=2\abs{v}^2
\]
are nonnegative, it follows that the local charge is nonincreasing with increasing time on slices of a backward cone:
\begin{equation}\label{LocalChargeBound}
  \int_a^b \left( \abs{u(x,t)}^2 + \abs{v(x,t)}^2 \right)\, \, dx \le \int_{a-t}^{b+t} \left(\abs{u(x,0)}^2 + \abs{v(x,0)}^2 \right) \, dx
\end{equation}
for any $a<b$ and $t \ge 0$. Moreover, taking $t=t_0$ in \eqref{LocalCharge} gives
\begin{multline}\label{LocalCharge2}
  \int_0^{t} 2\abs{u(x+t-s,s)}^2 \, ds
  + \int_0^{t} 2\abs{v(x-t+s,s)}^2 \, ds
  \\
  = \int_{x-t}^{x+t} \left( \abs{u(y,0)}^2 + \abs{v(y,0)}^2 \right) \, dy.
\end{multline}
This identity was used by Huh \cite{Huh2013, Huh2015} to get global existence for the Gross-Neveu model, and is also a key tool in our proof of global existence. Moreover, it motivates our choice of function spaces. The space-time norms are defined in Section \ref{FS}, but we introduce right away the data space, motivated by the right hand side of \eqref{LocalCharge2}.

\begin{definition} For $T > 0$ set
\[
  \norm{f}_{D(T)} = \sup_{x \in \R} \left( \int_0^T \abs{f(x+2s)}^2 \, ds \right)^{1/2}.
\]
Let $D(T)$ be the completion of $C_c^\infty(\R)$ with respect to this norm.
\end{definition}

Then $L^2(\R) \subset D(T) \subset L^2_{\mathrm{loc}}(\R)$, and the following inequalities hold:
\begin{align}
  \label{f1}
  \norm{f}_{D(T)} &\le 2^{-1/2} \norm{f}_{L^2(\R)},
  \\
  \label{f2}
  \norm{f}_{L^2([a,a+2T])} &\le 2^{1/2} \norm{f}_{D(T)},
  \\
  \label{f3}
  \norm{f}_{L^2([a,a+R])} &\le 2^{1/2} \left( 1 + \frac{R}{2T} \right) \norm{f}_{D(T)}
\end{align}
for any $a \in \R$ and $R > 0$.

We note the following property of the subspace $L^2(\R)$ of $D(T)$.

\begin{lemma}\label{Lemma1}
If $f \in L^2(\R)$, then $\lim_{T \to 0^+} \norm{f}_{D(T)} = 0$.
\end{lemma}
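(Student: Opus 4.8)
The plan is to exploit the density of $C_c^\infty(\R)$ in $L^2(\R)$ together with the uniform bound \eqref{f1}, reducing the statement to the case of a compactly supported smooth function, for which the claim follows by a direct estimate. Given $f \in L^2(\R)$ and $\varepsilon > 0$, first choose $g \in C_c^\infty(\R)$ with $\norm{f-g}_{L^2(\R)} < \varepsilon$. By the triangle inequality for $\norm{\cdot}_{D(T)}$ and the bound \eqref{f1},
\[
  \norm{f}_{D(T)} \le \norm{f-g}_{D(T)} + \norm{g}_{D(T)} \le 2^{-1/2}\norm{f-g}_{L^2(\R)} + \norm{g}_{D(T)} < 2^{-1/2}\varepsilon + \norm{g}_{D(T)},
\]
so it suffices to show $\norm{g}_{D(T)} \to 0$ as $T \to 0^+$.

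For the compactly supported smooth $g$, say $\abs{g} \le M$ everywhere and $\operatorname{supp} g \subset [-L,L]$, I would estimate directly from the definition: for any $x \in \R$,
\[
  \int_0^T \abs{g(x+2s)}^2 \, ds \le M^2 \int_0^T \mathbb{1}_{[-L,L]}(x+2s) \, ds \le M^2 \min\left(T, \tfrac{L}{1}\right),
\]
and in fact the inner integral is at most $M^2 T$ uniformly in $x$ (the measure of $\{s \in [0,T] : x+2s \in [-L,L]\}$ is at most $T$). Hence $\norm{g}_{D(T)}^2 \le M^2 T \to 0$ as $T \to 0^+$. Combining, $\limsup_{T \to 0^+}\norm{f}_{D(T)} \le 2^{-1/2}\varepsilon$, and since $\varepsilon > 0$ was arbitrary, the limit is $0$.

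I do not expect any serious obstacle here; the only point requiring a little care is that $\norm{\cdot}_{D(T)}$ is finite on $L^2(\R)$ in the first place, which is exactly the content of \eqref{f1} and lets the triangle-inequality splitting go through. One could alternatively avoid the density argument entirely and prove the estimate for general $f \in L^2(\R)$ by dominated convergence — writing $\norm{f}_{D(T)}^2 = \sup_x \int_\R \abs{f(y)}^2 \tfrac12 \mathbb{1}_{[x,x+2T]}(y)\,dy$ and noting the integrand is dominated by $\tfrac12\abs{f}^2 \in L^1$ with pointwise limit $0$ — but this requires justifying the interchange of the supremum with the limit, so the reduction to $C_c^\infty$ is cleaner.
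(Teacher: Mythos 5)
Your proof is correct, but it takes a different route from the paper. You argue by density: approximate $f$ in $L^2$ by a bounded, compactly supported $g$, control the error term via the uniform bound \eqref{f1}, and observe that $\norm{g}_{D(T)}^2 \le \norm{g}_{L^\infty}^2 T \to 0$ (compact support is not even needed here, only boundedness). The paper instead argues by contradiction: if the claim failed, there would be $T_n \to 0$ and base points $x_n$ with $\int_0^{T_n}\abs{f(x_n+2t)}^2\,dt \ge \varepsilon_0$; since $f \in L^2$ the $x_n$ must stay bounded, a subsequence converges to some $x$, and then $\abs{f}^2$ would carry mass at least $2\varepsilon_0$ on every interval $[x-\delta,x+\delta]$, i.e.\ an atom, which is impossible for an $L^1$ density. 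Both arguments ultimately encode the uniform absolute continuity of the integral of $\abs{f}^2$; yours does so constructively and quantitatively ($\limsup_{T\to 0^+}\norm{f}_{D(T)} \le 2^{-1/2}\norm{f-g}_{L^2}$ for any bounded approximant $g$), while the paper's is shorter but non-constructive. Your closing remark about the dominated-convergence alternative correctly identifies the obstruction (interchanging the supremum over $x$ with the limit), which is precisely what the paper's compactness argument is designed to circumvent. The only blemish is the spurious expression $\min(T, L/1)$ in your display, which you immediately and correctly discard in favour of the bound $M^2T$.
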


\begin{proof}
The alternative is that there exist $\varepsilon_0 > 0$, $T_n > 0$ and $x_n \in \R$ such that $\lim_{n \to \infty} T_n = 0$ but $\int_0^{T_n} \abs{f(x_n+2t)}^2 \, dt \ge \varepsilon_0$ for all $n \in \N$. Since $f \in L^2(\R)$, the sequence $x_n$ must be bounded, hence it has a subsequence converging to some $x \in \R$. It follows that $\int_{x-\delta}^{x+\delta} \abs{f(y)}^2 \, dy \ge \varepsilon_0$ for all $\delta > 0$, so we have a contradiction.
\end{proof}

\section{Main results}\label{MR}

\subsection{Maxwell-Dirac-Thirring-Gross-Neveu equations} The values of the real coupling constants $\lambda_j$ play no role in our analysis, so to simplify the notation we set them all equal to $1$. In terms of $\psi = (u,v)^\intercal$, the equations then read
\begin{subequations}\label{MD'}
\begin{align}
  \label{MD'a}
(\partial_t + \partial_x)u &= -imv + i (A_0 + A_1) u + 2i \abs{v}^2 u + 2i \re(u\overline v) v,
  \\
  \label{MD'b}
  (\partial_t - \partial_x)v &= -imu + i (A_0 - A_1) v + 2i \abs{u}^2 v + 2i \re(u\overline v) u,
  \\
  \label{MD'c}
  (\partial_t^2-\partial_x^2) A_0 &= - (\abs{u}^2 + \abs{v}^2),
  \\
  \label{MD'd}
  (\partial_t^2-\partial_x^2) A_1 &= \abs{u}^2 - \abs{v}^2,
  \\
  \label{MD'e}
  \partial_t A_0 &= \partial_x A_1.
\end{align}
\end{subequations}

The last equation, the Lorenz gauge condition, imposes an obvious constraint on the initial data. An additional data constraint (which appears to have been ignored in earlier treatments of the Maxwell-Dirac equations in one space dimension, with the exception of Okamoto's paper \cite{Okamoto2013}) comes from the relation
\[
  \partial_t A_1 = \partial_x A_0 - E
\]
defining the electric field $E$. Thus, the initial conditions for \eqref{MD'} take the form
\begin{subequations}\label{MDdata}
\begin{alignat}{2}
   \label{MDdata1}
   u(x,0) &= f(x),& \qquad v(x,0) &= g(x),
   \\
   \label{MDdata2}
   A_0(x,0) &= a_0(x),& \qquad A_1(x,0) &= a_1(x), 
   \\
   \label{MDdata3}
   \partial_t A_0(x,0) &= \frac{d a_1}{dx} (x),& \qquad \partial_t A_1(x,0) &= \frac{d a_0}{dx} (x) - E_0(x),
\end{alignat}
\end{subequations}
where $E_0$ denotes the initial value of the electric field.

From \eqref{MD'c}--\eqref{MD'e} one obtains the Gauss law
\[
  \partial_x E = \abs{u}^2 + \abs{v}^2,
\]
implying
\[
  E(x,0) = E_0(x) = \kappa + \int_0^x (\abs{f(y)}^2 + \abs{g(y)}^2) \, dy
\]
for some constant $\kappa \in \R$. So if $f,g \in L^2(\R)$, then $E_0$ is a bounded and continuous function (in fact, absolutely continuous and of bounded variation), determined by $f$ and $g$ up to a constant. However, our approach will be to first prove local existence of \eqref{MD'a}--\eqref{MD'd} with data \eqref{MDdata} for a completely freely chosen field $E_0 \in BC(\R;\R)$. Then we show that if this field actually satisfies the Gauss law, then the solution satisfies the Lorenz gauge condition \eqref{MD'e}.

\begin{remark}
The important parts of the data are $f$, $g$ and $E_0$. The fields $a_0$ and $a_1$, on the other hand, are rather uninteresting. In fact, by a gauge transformation one can see that the effect of varying $a_0$ and $a_1$ is simply to multiply $(u,v)$ by a phase factor. Indeed, let $\chi(x,t)$ solve
\[
  (\partial_t^2 - \partial_x^2)\chi = 0, \qquad \chi(0,x) = \chi_0(x),
  \qquad \partial_t \chi(0,x) = \chi_1(x)
\]
for given real valued $\chi_0(x)$ and $\chi_1(x)$. Then \eqref{MD'} is invariant under the gauge transformation
\[
  (u,v,A_0,A_1) \to (u',v',A_0',A_1') = (e^{i\chi}u,e^{i\chi}v,A_0-\partial_t \chi,A_1-\partial_x \chi).
\]
Now, we can make the initial values of $A_0'$ and $A_1'$ equal any choice of fields $a_0'$ and $a_1'$, by choosing
\[
  \chi_0(x) = \int_0^x (a_1-a_1')(y) \, dy, \qquad \chi_1(x) = a_0(x) - a_0'(x).
\]
Then we have moreover $\partial_t A_0'(x,0) = \frac{d}{dx} a_1'(x)$ and $\partial_t A_1'(x,0) = \frac{d}{dx} a_0'(x) - E_0(x)$, so the initial conditions \eqref{MDdata3} are gauge invariant. Note that if the $a$'s are continuous, then $\chi \in C^1(\R \times \R)$, so the above formal considerations are justified.
\end{remark}

We now state our local and global results for \eqref{MD'}.

\begin{theorem}[Local well-posedness for Maxwell-Dirac-Thirring-Gross-Neveu]\label{Thm1} There exists $\varepsilon_0 > 0$ such that for any time $T > 0$ and for any initial data $f,g \in D(T)$ and $a_0,a_1,E_0 \in BC(\R;\R)$ satisfying
\begin{equation}\label{T2}
  \norm{f}_{D(T)}^2 + \norm{g}_{D(T)}^2 \le \varepsilon_0
\end{equation}
and
\begin{equation}\label{T1}
  T\left( m + \norm{a_0}_{L^\infty} + \norm{a_1}_{L^\infty}\right) +  T^2\norm{E_0}_{L^\infty} \le \varepsilon_0,
\end{equation}
the equations \eqref{MD'a}--\eqref{MD'd} have a unique solution $(u,v,A_0,A_1)$ on $\R \times (0,T)$ satisfying the initial conditions \eqref{MDdata}, the regularity conditions
\begin{subequations}\label{Regularity}
\begin{align}
  \label{RegularityA}
  u,v &\in C([0,T];D(T)),
  \\
  \label{RegularityC}
  A_0,A_1 &\in C([0,T];BC(\R;\R))
\end{align}
\end{subequations}
and the properties
\begin{enumerate}
  \item\label{P1} The local charge conservation \eqref{LocalCharge} holds, hence also \eqref{LocalChargeBound} and \eqref{LocalCharge2}.
  \item\label{P2} There exist $p,q \in D(T)$ such that $\abs{u(x,t)} \le p(x-t)$ and $\abs{v(x,t)} \le q(x+t)$.
\end{enumerate}
Moreover, the electric field $E := \partial_x A_0 - \partial_t A_1$ belongs to $C([0,T];BC(\R;\R))$, the solution depends continuously on the data as a map into the class \eqref{Regularity}, and higher regularity persists, so in particular, the solution is a limit of smooth solutions.

If we assume additionally that $f,g \in L^2(\R)$, then $u,v \in C([0,T];L^2(\R))$ and depend continuously on the data as maps into that space, and the conservation of total charge \eqref{ChargeConservation} holds.

So far, $E_0$ was a freely chosen field in $BC(\R,\R)$. However, if we assume in addition that it satisfies the Gauss law
\begin{equation}\label{GaussLaw}
  \frac{d}{dx} E_0 = \abs{f}^2 + \abs{g}^2,
\end{equation}
then the Lorenz gauge condition \eqref{MD'e} is satisfied in $\R \times (0,T)$.
\end{theorem}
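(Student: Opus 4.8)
The plan is a Picard iteration carried out directly on the slab $\R\times[0,T]$, using the characteristic form of the equations. Integrating \eqref{MD'a} along the lines $x-t=\mathrm{const}$ and \eqref{MD'b} along $x+t=\mathrm{const}$, and writing $A_0,A_1$ by d'Alembert's formula (the derivative data in \eqref{MDdata3} being absorbed into telescoped boundary terms such as $\tfrac12[a_1(x+t)-a_1(x-t)]$, and the $A_1$ formula additionally carrying $-\tfrac12\int_{x-t}^{x+t}E_0$, so that only $a_0,a_1\in BC(\R;\R)$ themselves enter), one reduces to the fixed-point system
\begin{align*}
  u(x,t) &= f(x-t) + \int_0^t \bigl[-imv + i(A_0+A_1)u + 2i\abs{v}^2 u + 2i\re(u\overline v)v\bigr](x-t+s,s)\d s,
  \\
  v(x,t) &= g(x+t) + \int_0^t \bigl[-imu + i(A_0-A_1)v + 2i\abs{u}^2 v + 2i\re(u\overline v)u\bigr](x+t-s,s)\d s
\end{align*}
for the spinor, coupled with the d'Alembert formulas for $A_0,A_1$, whose inhomogeneous parts are the double integrals $\tfrac12\int_0^t\!\int_{x-(t-s)}^{x+(t-s)}$ of $\mp(\abs{u}^2\pm\abs{v}^2)$ (top signs for $A_0$). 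I would seek a fixed point in a small ball, of radius comparable to $(\norm{f}_{D(T)}^2+\norm{g}_{D(T)}^2)^{1/2}$, in the space of quadruples with $A_0,A_1\in C([0,T];BC(\R;\R))$ and $u,v$ in the right-/left-mover spaces $X^+(T)$, $X^-(T)$ consisting of those functions admitting a pointwise envelope $\abs{u(x,t)}\le p(x-t)$ with $p\in D(T)$ (resp.\ $\abs{v(x,t)}\le q(x+t)$ with $q\in D(T)$), normed by the infimum of $\norm{p}_{D(T)}$ over admissible envelopes. This envelope structure is exactly property \eqref{P2}, and it is dictated by the local charge identity \eqref{LocalCharge2}.

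The core of the argument is a short list of multilinear estimates, all resting on the elementary fact that for $p\in D(T)$, $c\in\R$ and $0\le t\le T$ one has $\int_0^t p(c+2s)^2\d s\le\norm{p}_{D(T)}^2$ and, after the substitution $s\mapsto t-s$, also $\int_0^t p(c-2s)^2\d s\le\norm{p}_{D(T)}^2$: the $D(T)$ norm controls the $L^2$ mass of an envelope integrated along the \emph{opposite} family of characteristics (the two families separating at speed $2$). Thus, using $\abs{u(x-t+s,s)}\le p(x-t)$ and $\abs{v(x-t+s,s)}\le q(x-t+2s)$,
\[
  \Bigabs{\int_0^t \bigl(\abs{v}^2u\bigr)(x-t+s,s)\d s}
  \le p(x-t)\int_0^t q(x-t+2s)^2\d s
  \le \norm{q}_{D(T)}^2\,p(x-t),
\]
so each cubic term costs a factor $\norm{u}_{X^+(T)}^2$ or $\norm{v}_{X^-(T)}^2$, controlled by \eqref{T2}; the term $i(A_0\pm A_1)u$ costs $t(\norm{A_0}_{L^\infty}+\norm{A_1}_{L^\infty})$; the mass term produces a constant envelope of $D(T)$ norm $\le Tm\norm{v}_{X^-(T)}$ (constants do lie in $D(T)$); and the light-cone integral is $O(T)\bigl(\norm{u}_{X^+(T)}^2+\norm{v}_{X^-(T)}^2\bigr)$ by \eqref{f2} on intervals of length $2T$. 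Since moreover $\norm{A_0}_{L^\infty}+\norm{A_1}_{L^\infty}$ is bounded by $\norm{a_0}_{L^\infty}+\norm{a_1}_{L^\infty}+T\norm{E_0}_{L^\infty}$ plus a term controlled by the ball radius, the overall contraction factor is $\lesssim\norm{f}_{D(T)}^2+\norm{g}_{D(T)}^2+T(m+\norm{a_0}_{L^\infty}+\norm{a_1}_{L^\infty})+T^2\norm{E_0}_{L^\infty}$, which is $<1$ precisely under \eqref{T2}--\eqref{T1}. The difference estimates have the same form (with $\bigabs{\abs{v_1}^2-\abs{v_2}^2}\le(\abs{v_1}+\abs{v_2})\abs{v_1-v_2}$), so the map is a contraction; this yields existence, uniqueness in the ball (and in the full class \eqref{Regularity} after a short-time Gr\"onwall argument and continuation), persistence of higher regularity (so the solution is a limit of smooth ones---run the scheme in a smoother norm, or bootstrap via the equations), and Lipschitz dependence of the solution on the data.

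The remaining assertions I would obtain by approximation from smooth solutions, which the same scheme produces and which approximate the rough solution by continuity of the data-to-solution map on $D(T)$. For smooth, compactly supported data $\partial_\mu j^\mu=0$ holds classically, so \eqref{LocalCharge}, \eqref{LocalChargeBound}, \eqref{LocalCharge2} hold; since $X^\pm(T)$-convergence of the approximants forces $L^2_{\mathrm{loc}}$-convergence of every time slice (by \eqref{f3}), these identities survive the limit, giving property \eqref{P1}. If moreover $f,g\in L^2(\R)$, one notes that truncation and mollification approximate $f,g$ by compactly supported smooth data \emph{without} increasing the $D(T)$ norm, so that \eqref{T2} is preserved; then \eqref{LocalChargeBound} supplies a uniform $L^2$ tail bound on the approximating solutions, which upgrades $X^\pm(T)$-convergence to convergence in $C([0,T];L^2(\R))$, whence $u,v\in C([0,T];L^2(\R))$ with continuous dependence and the conservation of total charge \eqref{ChargeConservation}. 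For the electric field, put $B:=\partial_t A_0-\partial_x A_1$; differentiating \eqref{MD'c}--\eqref{MD'd} shows $(E,B)$ solves $\partial_t E=\partial_x B-(\abs{u}^2-\abs{v}^2)$, $\partial_t B=\partial_x E-(\abs{u}^2+\abs{v}^2)$, equivalently the transport equations $(\partial_t-\partial_x)(E+B)=-2\abs{u}^2$ and $(\partial_t+\partial_x)(E-B)=2\abs{v}^2$, whose source terms are of exactly the type estimated above, with data $E\init=E_0\in BC(\R;\R)$ and $B\init=0$ (from \eqref{MDdata3}); solving these transport equations and inserting the envelope bounds gives $E+B,\,E-B\in C([0,T];BC(\R;\R))$, hence $E\in C([0,T];BC(\R;\R))$ (and the resulting continuous function equals the distribution $\partial_x A_0-\partial_t A_1$ by approximation). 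Finally, if the Gauss law \eqref{GaussLaw} holds then $\partial_t B\init=\frac{d}{dx}E_0-(\abs{f}^2+\abs{g}^2)=0$, while $\square B=\partial_t\square A_0-\partial_x\square A_1=-(\partial_t\rho+\partial_x j)=0$ by the charge conservation just established; so $B$ solves the free wave equation with vanishing Cauchy data, whence $B\equiv0$, which is the Lorenz gauge condition \eqref{MD'e}.

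In my view the main obstacle is not any single estimate but the bookkeeping that makes the scheme close within the envelope spaces: one must check that the \emph{output} of the Duhamel map again admits a $D(T)$ envelope of the opposite-characteristic type---not merely that the relevant space-time integrals are finite---that the constant envelopes generated by the mass and electromagnetic terms stay harmless, and that the light-cone integral, estimated via \eqref{f2} and \eqref{f3} on intervals of length $\lesssim T$, feeds back into the spinor equations with the correct power of $T$, so that the smallness thresholds come out to be exactly \eqref{T2} and \eqref{T1}. Once existence and charge conservation are in place, the transport analysis of the electric field and the distributional wave-equation argument for the Lorenz condition are comparatively routine, modulo the approximation step needed to transfer the classical identities to the rough solution.
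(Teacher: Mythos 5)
Your proposal is correct and follows essentially the same route as the paper: a contraction argument on the slab $\R\times[0,T]$ in spaces built from characteristic $L^2$ norms and $D(T)$-envelopes, d'Alembert formulas for $A_0\pm A_1$, passage to the limit from smooth solutions to get the charge identities, and the tail bound from \eqref{LocalChargeBound} combined with \eqref{f3} to upgrade to $C([0,T];L^2)$. The only real divergence is cosmetic: the paper obtains $E$ and $\partial_t A_0-\partial_x A_1$ by differentiating the d'Alembert formulas and verifies the Lorenz condition by plugging the local charge identity \eqref{LocalCharge2} into the resulting explicit expression, whereas you derive the same expressions via transport equations for $E\pm B$ and conclude $B\equiv 0$ from the free wave equation with vanishing Cauchy data --- both arguments are valid and yield identical formulas.
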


\begin{remark}
If $f,g \in L^2(\R)$, then the smallness condition \eqref{T2} is satisfied for $T > 0$ small enough, by Lemma \ref{Lemma1}.
\end{remark}

\begin{remark}\label{Drem}
When we say that $(u,v,A_0,A_1)$ is a solution on $\R \times (0,T)$, we mean in the sense of distributions. In fact, all the nonlinear terms on the right hand sides of \eqref{MD'a}--\eqref{MD'd} are locally integrable on $\R \times [0,T]$, so they are well-defined as distributions. This is obvious for the quadratic terms, but not so obvious for the cubic terms. But by the properties \eqref{P1} and \eqref{P2} in Theorem \ref{Thm1},
\begin{align*}
  \int_a^{a+T} \!\!\! \int_0^T \abs{v}^2 \abs{u}(x,t) \, dt \, dx 
  &\le
  \int_{a-T}^{a+T} \!\!\! \int_0^T \abs{v(y+t,t)}^2 p(y) \, dt \, dy
  \\
  &\le
  2\sqrt{T} \norm{p}_{D(T)} \left( \sup_{y} \int_0^T \abs{v(y+t,t)}^2 \, dt \right) < \infty,
\end{align*}
and similarly for $\abs{u}^2 \abs{v}$.
\end{remark}

\begin{theorem}[Global existence for Maxwell-Dirac-Thirring-Gross-Neveu]\label{Thm2} For any data $f,g \in L^2(\R)$ and $a_0,a_1,E_0 \in BC(\R;\R)$ satisfying the Gauss law \eqref{GaussLaw}, the solution from Theorem \ref{Thm1} extends globally in time, so we have $u,v \in C(\R;L^2(\R))$ and $A_0,A_1,E \in C(\R;BC(\R;\R))$.
\end{theorem}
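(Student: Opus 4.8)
The plan is to prolong the solution of Theorem~\ref{Thm1} to all times by a continuation argument. Let $T^*\in(0,\infty]$ be the supremum of those $T$ for which a solution of \eqref{MD'} with the given data exists on $\R\times(0,T)$ with the regularity and properties listed in Theorem~\ref{Thm1}; by uniqueness these solutions patch to a well-defined solution on $\R\times(0,T^*)$, and it is enough to exclude $T^*<\infty$. So suppose $T^*<\infty$. Conservation of total charge \eqref{ChargeConservation} gives $\norm{u(\cdot,t)}_{L^2}^2+\norm{v(\cdot,t)}_{L^2}^2=Q:=\norm{f}_{L^2}^2+\norm{g}_{L^2}^2$ for $t<T^*$, and by \eqref{LocalChargeBound} the $L^2$ mass of $u$ and $v$ is tight uniformly for $t<T^*$ (this is where $T^*<\infty$ enters).

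Since the data satisfy the Gauss law \eqref{GaussLaw}, Theorem~\ref{Thm1} gives the Lorenz gauge \eqref{MD'e} on $\R\times(0,T)$ for every $T<T^*$; combined with \eqref{MD'c}--\eqref{MD'd} and the Gauss law $\partial_x E=\abs{u}^2+\abs{v}^2$ one obtains the transport identities
\[
  (\partial_t+\partial_x)E=2\abs{v}^2,\qquad (\partial_t-\partial_x)E=-2\abs{u}^2,
\]
\[
  (\partial_t-\partial_x)(A_0+A_1)=-E,\qquad (\partial_t+\partial_x)(A_0-A_1)=E .
\]
Integrating the first along the null line through $(x-t,0)$ and using \eqref{LocalCharge2} to bound $\int_0^t 2\abs{v(x-t+s,s)}^2\,ds\le Q$ yields $\norm{E(\cdot,t)}_{L^\infty}\le\norm{E_0}_{L^\infty}+Q$ for all $t\ge0$; feeding this into the equations for $A_0\pm A_1$ and integrating again along null lines bounds $\norm{A_0(\cdot,t)}_{L^\infty}$ and $\norm{A_1(\cdot,t)}_{L^\infty}$ by $\norm{a_0}_{L^\infty}+\norm{a_1}_{L^\infty}+t(\norm{E_0}_{L^\infty}+Q)$. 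So $E,A_0,A_1$ stay bounded on $[0,T^*)$.

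The decisive step is to show that the spinor stays small in the $D(T)$ norm on short time scales, uniformly on $[0,T^*)$:
\[
  \sup_{0\le t<T^*}\bigl(\norm{u(\cdot,t)}_{D(T)}^2+\norm{v(\cdot,t)}_{D(T)}^2\bigr)\longrightarrow 0\quad\text{as }T\to0^+ .
\]
Pairing \eqref{MD'a} with $\overline{u}$ and taking real parts annihilates the electromagnetic and the Thirring-type cubic terms, leaving
\[
  (\partial_t+\partial_x)\abs{u}^2=G_u,\qquad G_u:=-2m\im(u\overline{v})+4\re(u\overline{v})\im(u\overline{v}),
\]
whence $\abs{G_u}\le 2m\abs{u}\abs{v}+4\abs{u}^2\abs{v}^2$. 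Integrating $\partial_s\abs{u}^2+\partial_y\abs{u}^2=G_u$ over the parallelogram $P(a,t,T)$ with base $[a-t,a-t+2T]\times\{0\}$, top $[a,a+2T]\times\{t\}$ and lateral sides on the right-moving null lines $y-s=a-t$ and $y-s=a-t+2T$ — across which the flux of $(\abs{u}^2,\abs{u}^2)$ vanishes, being tangent to them — the divergence theorem gives
\[
  \int_a^{a+2T}\abs{u(y,t)}^2\,dy=\int_{a-t}^{a-t+2T}\abs{f(y)}^2\,dy+\iint_{P(a,t,T)}G_u,
\]
hence $\norm{u(\cdot,t)}_{D(T)}^2\le\norm{f}_{D(T)}^2+\tfrac12\sup_{a}\iint_{P(a,t,T)}\abs{G_u}$. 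The first term vanishes as $T\to0^+$, uniformly in $t$, because $f\in L^2$ (Lemma~\ref{Lemma1}). The error term I would control by bounding $\abs{G_u}$ as above, writing $\abs{u(y,s)}\le p(y-s)$ via the envelope \eqref{P2}, estimating $\int_0^t\abs{v(z+s,s)}^2\,ds\le Q/2$ along each null line $y-s=z$ by \eqref{LocalCharge2}, and using the uniform tightness to handle large $\abs{a}$; this reduces $\iint_{P(a,t,T)}\abs{G_u}$ to integrals of $p^2$ (and of $p$) over a window of length $2T$, which again tend to $0$ with $T$. The same computation starting from \eqref{MD'b} controls $\norm{v(\cdot,t)}_{D(T)}$.

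Granting this, pick $T>0$ small enough that the last displayed supremum is $\le\varepsilon_0$ and, using the boundedness of $E,A_0,A_1$ on $[0,T^*)$, small enough that \eqref{T1} holds at every $t_0<T^*$ with $(a_0,a_1,E_0)$ replaced by $(A_0(\cdot,t_0),A_1(\cdot,t_0),E(\cdot,t_0))$. Applying Theorem~\ref{Thm1} with initial time $t_0$ (the system \eqref{MD'} is autonomous) to the data $u(\cdot,t_0),v(\cdot,t_0)\in L^2$ and $A_0(\cdot,t_0),A_1(\cdot,t_0),E(\cdot,t_0)\in BC(\R;\R)$ — which satisfy the Gauss law, since $\partial_x E=\abs{u}^2+\abs{v}^2$ on $[0,T^*)$ — produces a solution on $\R\times(t_0,t_0+T)$ that, by uniqueness, extends the given one; choosing $t_0\in(T^*-T,T^*)$ reaches past $T^*$, contradicting its maximality. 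Hence $T^*=\infty$, and the global regularity together with the conservation laws follow from Theorem~\ref{Thm1} and the a priori bounds, now valid on all of $\R$. The step I expect to be the main obstacle is the uniform-in-time smallness of $\norm{u(\cdot,t)}_{D(T)}$: one must squeeze genuine smallness out of the null-flux identity while the cross terms in $G_u$ pair the small, right-transported field $u$ against $v$ and the potentials, which are controlled only through the conserved — but not necessarily small — charge, and in particular one must ensure the envelope $p$ is square-integrable with a uniform modulus. It is precisely here that the $D(T)$ spaces and the local charge identities \eqref{LocalCharge2}, \eqref{LocalChargeBound} carry the weight of the argument.
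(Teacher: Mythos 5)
Your overall strategy matches the paper's: reduce global existence to a priori bounds on $\norm{u(\cdot,t)}_{D(T)}$, $\norm{v(\cdot,t)}_{D(T)}$ and on $A_\mu$, $E$ in $L^\infty$, then iterate Theorem \ref{Thm1}. The bounds on $E$ and $A_\mu$ (via null-transport or, equivalently, directly from \eqref{A0}, \eqref{A1}, \eqref{E} together with \eqref{LocalChargeBound} and \eqref{LocalCharge2}) are fine, as is the parallelogram/divergence identity
\[
  \int_a^{a+2T}\abs{u(y,t)}^2\,dy=\int_{a-t}^{a-t+2T}\abs{f(y)}^2\,dy+\iint_{P(a,t,T)}G_u .
\]
But the decisive step, which you yourself flag as the main obstacle, is genuinely open in your write-up: your plan for the quartic term $4\abs{u}^2\abs{v}^2$ in $G_u$ is to insert the envelope $\abs{u(y,s)}\le p(y-s)$ and integrate $\abs{v}^2$ along null lines via \eqref{LocalCharge2}. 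This is circular over long times. Property \eqref{P2} furnishes an envelope $p$ only on the short existence interval of each application of Theorem \ref{Thm1}, with $\norm{p}_{D(T)}$ comparable to the very quantity $\norm{u(\cdot,kT)}_{D(T)}$ you are trying to bound; and patching the envelopes from the $O(T^*/T)$ subintervals into a single $p$ valid on $[0,t]$ (which the parallelogram requires, since it reaches back to $s=0$) gives $\norm{p}_{D(T)}^2\lesssim\sum_k\norm{p_k}_{D(T)}^2$, which does not stay small as $T\to0^+$. So the claimed uniform smallness of $\sup_{t<T^*}\norm{u(\cdot,t)}_{D(T)}$ does not follow from the ingredients you list.

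The missing idea is Huh's integrating-factor refinement of Delgado's trick, which eliminates the quartic term before any norm is taken. Setting $\phi_+(x,t)=\int_0^t4\abs{v(x-t+s,s)}^2\,ds$ one has $(\partial_t+\partial_x)\phi_+=4\abs{v}^2$, so
\[
  (\partial_t+\partial_x)\bigl(e^{-\phi_+}\abs{u}^2\bigr)\le 2m\abs{u}\abs{v}\,e^{-\phi_+},
\]
and \eqref{LocalCharge2} gives the uniform bound $\norm{\phi_\pm}_{L^\infty}\le 2M$ with $M=\norm{f}_{L^2}^2+\norm{g}_{L^2}^2$. Integrating along characteristics, undoing the factor at the cost of $e^{4M}$, estimating $2\abs{u}\abs{v}\le\abs{u}^2+\abs{v}^2$, taking $D(T)$ norms and applying Gr\"onwall yields
\[
  \norm{u(\cdot,t)}_{D(T)}^2+\norm{v(\cdot,t)}_{D(T)}^2\le\bigl(\norm{f}_{D(T)}^2+\norm{g}_{D(T)}^2\bigr)\exp\bigl(2me^{4M}t\bigr),
\]
which by Lemma \ref{Lemma1} can be made $\le\varepsilon_0$ up to any target time by shrinking $T$. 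No long-time envelope is ever needed. I recommend you replace your parallelogram estimate of $G_u$ by this argument; the rest of your continuation scheme then goes through.
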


\subsection{Quadratic Dirac equations}

Here we consider equations of the form
\begin{subequations}\label{QD'}
\begin{align}
  \label{QD'a}
  (\partial_t + \partial_x)u &= -imv + c_1 \abs{v}^2 + c_2 u v,
  \\
  \label{QD'b}
  (\partial_t - \partial_x)v &= -imu + c_3 \abs{u}^2 + c_4 u v,
\end{align}
\end{subequations}
where the $c_j$ are complex constants. Then we have the following local well-posedness result.

\begin{theorem}\label{Thm3} There exists $\varepsilon_0 > 0$ such that for any time $T > 0$ and any initial data $f,g \in L^2(\R)$ satisfying
\[
  \sqrt{T} \bigl( m + \norm{f}_{L^2} + \norm{g}_{L^2}\bigr) \le \varepsilon_0
\]
the equations \eqref{QD'} have a solution in $C([0,T];L^2(\R))$ with data $(f,g)$. The solution satisfies the properties \eqref{P1} and \eqref{P2} from Theorem \ref{Thm1} and is unique in this class. The solution depends continuously on the data and higher regularity persists.
\end{theorem}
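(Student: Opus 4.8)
The plan is to set up a contraction mapping argument in the space $X(T) = \{(u,v) : u,v \in C([0,T];L^2(\R))\}$, but in order to exploit the null structure of the transport operators $\partial_t \pm \partial_x$ I would actually work in the stronger space motivated by property \eqref{P2}. Concretely, I would write the Duhamel formulas along characteristics: $u(x,t) = f(x-t) + \int_0^t N_1(u,v)(x-t+s,s)\,ds$ and $v(x,t) = g(x+t) + \int_0^t N_2(u,v)(x+t-s,s)\,ds$, where $N_1 = -imv + c_1\abs{v}^2 + c_2 uv$ and $N_2 = -imu + c_3\abs{u}^2 + c_4 uv$. The key observation is that the right-hand side terms, when integrated along the appropriate characteristic, pair a factor that is ``good'' in the direction of integration (a function of the other null coordinate, hence controlled in $D(T)$ by \eqref{f1}) with a bounded Duhamel integral. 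So the natural iteration space is $Y(T)$ consisting of $(u,v)$ with $u(x,t) = \tilde u(x-t,t)$, $v(x,t) = \tilde v(x+t,t)$, normed by $\sup_t \norm{\tilde u(\cdot,t)}_{L^2} + \sup_t \norm{\tilde v(\cdot,t)}_{L^2}$ together with the transported-square bounds $\sup_y \int_0^T \abs{u(y+t,t)}^2\,dt$ and $\sup_y \int_0^T \abs{v(y-t,t)}^2\,dt$.

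The core estimates I would establish are of two types. First, the $L^2_x$ bound: for the quadratic term $c_2 uv$ appearing in the $u$-equation, one writes $\int_{\R}\abs{\int_0^t (uv)(x-t+s,s)\,ds}^2 dx$ and uses Minkowski's integral inequality plus the pointwise bound $\abs{v(x-t+s,s)} = \abs{\tilde v(x-t+2s,s)}$, which after the change of variables becomes a function whose $L^2_x$ norm times $\sqrt{T}\sup\abs{u}$-type factor is controlled; more carefully, one wants to hit $u$ with the $L^2_x$ norm and $v$ with the transported bound, or vice versa, giving a gain of $\sqrt{T}$ (or $T$). The $\abs{v}^2$ term is handled by $\int_0^t\abs{v(x-t+s,s)}^2\,ds \le \norm{\tilde v}_{D(T)}^2$-type estimate directly, which gives an $L^\infty_x$ bound on the Duhamel integral, and then one uses \eqref{f1}/\eqref{f2} to pass to $L^2_{\mathrm{loc}}$; actually since $\abs{v}^2$ integrated along the characteristic is in $D(T)$ for free, this term is the most benign. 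The linear term $-imv$ contributes a factor $mT$ or $m\sqrt{T}$. Second, the transported-square bound: one estimates $\sup_y \int_0^T \abs{u(y+t,t)}^2\,dt$ for the solution map, again splitting $\abs{u}^2 \le 2\abs{f(y)}^2 + 2\abs{\int_0^t N_1(y+2s\ldots)}^2$ — wait, one must be careful since $u(y+t,t) = \tilde u(y,t) = f(y) + \int_0^t N_1(\ldots)$, so $\int_0^T\abs{\tilde u(y,t)}^2\,dt \le 2T\abs{f(y)}^2 + 2\int_0^T\bigl(\int_0^t \abs{N_1}\,ds\bigr)^2 dt$, and the first term is bounded by $2T\norm{f}_{L^\infty}$ which is bad — so instead I would keep $\int_0^T\abs{\tilde u(y,t)}^2\,dt$ controlled by noting $f(y)$ is just a constant in $t$ here and this quantity is genuinely $T\abs{f(y)}^2$, meaning the transported bound for $u$ is dominated by $\norm{f}_{L^\infty}$, not $\norm{f}_{L^2}$. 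This suggests the transported bounds should be stated with the $D(T)$-type supremum structure from the start — i.e. $\sup_y\bigl(\int_0^T\abs{u(y+t,t)}^2\,dt\bigr)^{1/2}$ should be compared against $\norm{f}_{D(T-\text{shifted})}$, exactly as in the Maxwell-Dirac theorem.

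The main obstacle, then, is the same structural point that makes Theorem \ref{Thm1} work: one must verify that the quadratic nonlinearity genuinely produces a smallness gain (a positive power of $T$) in the chosen norm, and that the norm is closed under the solution map. The delicate term is $uv$, which has no definite null structure (unlike $\abs{u}^2$ and $\abs{v}^2$, which are perfectly adapted to one characteristic each), so for $uv$ one must accept losing the transported structure on one factor and pay with an $L^2_x$–$L^2_t$ Hölder split, yielding only $\sqrt{T}$ rather than $T$; this is why the smallness hypothesis is stated with $\sqrt{T}$. Once the bilinear estimates
\[
  \Bignorm{\int_0^{(\cdot)} N_j\,ds}_{Y(T)} \lesssim \bigl(\sqrt{T}\,m + \norm{(u,v)}_{Y(T)}\bigr)\norm{(u,v)}_{Y(T)} + \sqrt{T}\,m
\]
and the corresponding difference estimates are in hand, the contraction follows for $\varepsilon_0$ small, uniqueness in the class is immediate from the same difference estimate, continuous dependence is standard, persistence of higher regularity follows by differentiating the equations and running the same scheme in higher-order spaces, and properties \eqref{P1}, \eqref{P2} follow as in Theorem \ref{Thm1} — \eqref{P2} directly from the pointwise Duhamel bounds with $p = \abs{f} + (\text{Duhamel bound})$ and similarly $q$, and \eqref{P1} by the divergence identity $\partial_t(\abs u^2+\abs v^2) + \partial_x(\abs u^2 - \abs v^2) = 2\re(\bar u(\partial_t+\partial_x)u + \bar v(\partial_t-\partial_x)v)$, which vanishes precisely when the nonlinearity has the charge-conserving form — here one checks $2\re(\bar u N_1 + \bar v N_2) = 0$ requires $c_1 = c_3 = 0$ and $c_2,c_4$ imaginary, so in general \eqref{P1} should be interpreted as the inhomogeneous version; but for the purpose of the local existence statement only the finiteness of the charge matters and that follows from $u,v \in C([0,T];L^2)$ directly.
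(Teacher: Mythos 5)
Your overall strategy is the paper's: Duhamel along characteristics and a contraction mapping in a space that combines $\sup_t L^2_x$ control with characteristic space--time $L^2$ norms and a pointwise transported envelope, the quadratic terms gaining a factor $\sqrt{T}$ because every nonlinearity in the $u$-equation contains a factor of $v$ and vice versa. The paper's (omitted) proof is precisely a contraction in $Y_+(T)\times Y_-(T)$ using Lemma \ref{Lemma2} and the quadratic estimates of Lemma \ref{Lemma3}.

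There is, however, one genuine error which you notice but do not repair: your ``transported-square'' norms run along the wrong characteristics. You propose $\sup_y\int_0^T\abs{u(y+t,t)}^2\,dt$, which measures $u$ along its \emph{own} characteristics $x-t=\mathrm{const}$; for the free wave $u=f(x-t)$ this equals $T\sup_y\abs{f(y)}^2$, so it forces $f\in L^\infty$, exactly the symptom you flag. But the remedy you suggest (comparison with a ``shifted'' $D(T)$ norm) cannot work: no translation or reparametrisation of the $D(T)$ norm controls $T\norm{f}_{L^\infty}^2$ by an $L^2$ quantity. The correct norm is
\[
\norm{u}_{X_+(T)}=\sup_{x\in\R}\Bigl(\int_0^T\abs{u(x-t,t)}^2\,dt\Bigr)^{1/2},
\]
which integrates $u$ along the \emph{transversal} (left-moving) characteristics; for $u=f(x-t)$ this equals $\norm{f}_{D(T)}\le 2^{-1/2}\norm{f}_{L^2}$ by \eqref{KeyIdentity1} and \eqref{f1}, it is propagated by the Duhamel formula (Lemma \ref{Lemma2}), and a posteriori it is exactly the quantity controlled by the local charge identity \eqref{LocalCharge2}. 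With this sign corrected (and the analogous correction for $v$), your estimates become those of Lemma \ref{Lemma3}: in the $u$-equation, $\abs{v}^2$ pairs two $X_-(T)$ factors, $uv$ pairs one $X_-(T)$ factor with the envelope norm $\mathcal X_+(T)$ of $u$, each gaining $\sqrt{T}$, and the hypothesis $\sqrt{T}(m+\norm{f}_{L^2}+\norm{g}_{L^2})\le\varepsilon_0$ then closes the contraction on a ball of radius comparable to $\norm{f}_{L^2}+\norm{g}_{L^2}$.

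Your remark on property \eqref{P1} is a valid caveat rather than a gap: for general $c_j$ the divergence identity $\partial_t(\abs{u}^2+\abs{v}^2)+\partial_x(\abs{u}^2-\abs{v}^2)=0$ fails for \eqref{QD'}, so \eqref{LocalCharge} can only hold in an inhomogeneous form; note that the paper itself lists only the models \eqref{TM}--\eqref{DKG}, not \eqref{QD}, as charge-conserving.
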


If $c_1=c_3=0$, so there are only terms of the type $uv$, one can do better; in fact, local well-posedness holds almost all the way down to the scale invariant $H^s$ Sobolev regularity $s=-1/2$, as shown by Machihara, Nakanishi and Tsugawa \cite[Theorem 1.5]{Machihara2010}. For $c_2=c_4=0$, local well-posedness in $H^s$ for $s > 1/4$ has been proved by Machihara \cite{Machihara2005} and $s > 0$ by Tesfahun and the author \cite{Selberg2010}, but the case $s=0$ appears to have remained an open problem which we solve.

\section{Function spaces}\label{FS}

We start with the spaces for the Dirac spinor, which we write in component form as $\psi= (u,v)^\intercal$. Then by our choice of Dirac matrices, the initial value problem
\[
  -i\gamma^\mu \partial_\mu \psi = (F,G)^\intercal, \qquad \psi(x,0) = (f(x),g(x))^\intercal
\]
reads
\begin{subequations}\label{IVP}
\begin{alignat}{2}
  \label{IVPa}
  (\partial_t + \partial_x)u &= iG,& \qquad u(x,0) &= f(x),
  \\
  \label{IVPb}
  (\partial_t - \partial_x)v &= iF,& \qquad v(x,0) &= g(x).
\end{alignat}
\end{subequations}
Integrating along characteristics we have
\begin{align*}
  u(x,t) &= f(x-t) +  i\int_0^t G(x-t+s,s) \, ds,
  \\
  v(x,t) &= g(x+t) +  i\int_0^t F(x+t-s,s) \, ds,
\end{align*}
hence
\begin{subequations}\label{IVPsol}
\begin{align}
  \label{IVPsola}
  \abs{u(x,t)} &\le \abs{f(x-t)} +  \int_0^T \abs{G(x-t+s,s)} \, ds,
  \\
  \label{IVPsolb}
  \abs{v(x,t)} &\le \abs{g(x+t)} +  \int_0^T \abs{F(x+t-s,s)} \, ds
\end{align}
\end{subequations}
for $0 \le t \le T$, 

We introduce the following space-time norms for $u$ and $v$ on a time-slab $\R \times [0,T]$:
\begin{align*}
  \norm{u}_{X_+(T)} &= \sup_{x \in \R} \left( \int_0^T \abs{u(x-t,t)}^2 \, dt \right)^{1/2},
  \\
  \norm{v}_{X_-(T)} &= \sup_{x \in \R} \left( \int_0^T \abs{v(x+t,t)}^2 \, dt \right)^{1/2},
\end{align*}
\begin{align*}
  \norm{u}_{\mathcal X_+(T)} &= \inf \left\{ \norm{p}_{D(T)} \colon p \in D(T), \; \text{$\abs{u(x,t)} \le p(x-t)$ in $\R \times [0,T]$} \right\},
  \\
  \norm{v}_{\mathcal X_-(T)} &= \inf \left\{ \norm{q}_{D(T)} \colon q \in D(T), \; \text{$\abs{v(x,t)} \le q(x+t)$ in $\R \times [0,T]$} \right\}
\end{align*}
and
\begin{align*}
  \norm{u}_{Y_+(T)} = \sup_{0 \le t \le T} \norm{u(\cdot,t)}_{D(T)} + \norm{u}_{X_+(T)} + \norm{u}_{\mathcal X_+(T)},
  \\
  \norm{v}_{Y_-(T)} = \sup_{0 \le t \le T} \norm{v(\cdot,t)}_{D(T)} + \norm{v}_{X_-(T)} + \norm{v}_{\mathcal X_-(T)}.
\end{align*}
The forcing terms $G$ and $F$ will be placed in the norms
\begin{align*}
  \norm{G}_{N_+(T)} &= \biggnorm{ \int_0^T \abs{G(\cdot + s,s)} \, ds }_{D(T)},
  \\
  \norm{F}_{N_-(T)} &= \biggnorm{ \int_0^T \abs{F(\cdot - s,s)} \, ds }_{D(T)}.
\end{align*}

\begin{remark}
The $X_\pm(T)$ norms are motivated by the local charge conservation \eqref{LocalCharge2} (see also the estimate in Remark \ref{Drem}). To motivate the $\mathcal X_\pm(T)$ norms, observe that by \eqref{IVPsol} we have $\abs{u(x,t)} \le p(x-t)$ and $\abs{v(x,t)} \le q(x+t)$, where $p$ and $q$ belong to $D(T)$ if $f,g \in D(T)$ and $\norm{G}_{N_+(T)}, \norm{F}_{N_-(T)} < \infty$.
\end{remark}

\begin{definition} Let $Y_\pm(T)$ and $N_\pm(T)$ be the completions of $C_c^\infty(\R \times [0,T])$ with respect to the norms $\norm{\cdot}_{Y_\pm(T)}$ and $\norm{\cdot}_{N_\pm(T)}$, respectively.
\end{definition}

Note that
\[
  Y_\pm(T) \subset C([0,T];D(T)).
\]

For $u(x,t) = f(x-t)$ and $v(x,t) = g(x+t)$, the free parts of the solution of \eqref{IVP}, we have the remarkable identities
\begin{equation}\label{KeyIdentity1}
\begin{aligned}
  \norm{f(x-t)}_{X_+(T)} &= \norm{f(x-t)}_{\mathcal X_+(T)} = \norm{f}_{D(T)},
  \\
  \norm{g(x+t)}_{X_-(T)} &= \norm{g(x+t)}_{\mathcal X_-(T)} = \norm{g}_{D(T)},
\end{aligned}
\end{equation}
and for a constant function $u(x,t) = c$,
\begin{equation}\label{KeyIdentity2}
  \norm{c}_{X_\pm(T)} = \norm{c}_{\mathcal X_\pm(T)} = \norm{c}_{D(T)} = c\sqrt{T}.
\end{equation}

For the solution of the linear initial value problem \eqref{IVP} we then have the following key estimates.

\begin{lemma}\label{Lemma2}
Let $T > 0$, $f,g \in D(T)$ and $(G,F) \in N_+(T) \times N_-(T)$. Then \eqref{IVP} has a unique solution $(u,v) \in Y_+(T) \times Y_-(T)$. The solution satisfies the estimates
\begin{align*}
  \norm{u}_{Y_+(T)} &\le 3 \norm{f}_{D(T)} + 3 \norm{G}_{N_+(T)},
  \\
  \norm{v}_{Y_-(T)} &\le 3 \norm{g}_{D(T)} + 3 \norm{F}_{N_-(T)}.
\end{align*}
\end{lemma}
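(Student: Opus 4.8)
The plan is to verify each of the three components of the $Y_+(T)$ norm separately for the solution $u$ given by the explicit formula $u(x,t) = f(x-t) + i\int_0^t G(x-t+s,s)\d s$, and symmetrically for $v$; uniqueness then follows since the difference of two solutions solves \eqref{IVP} with zero data and zero forcing, which by the energy/characteristic formula is identically zero. For existence in $Y_+(T)$ itself (not merely satisfying the bounds) one first runs the estimates for $f \in C_c^\infty(\R)$ and $G \in C_c^\infty(\R\times[0,T])$, where $u$ is smooth and the formula is classical, and then passes to the limit using that $Y_+(T)$ is by definition the completion of $C_c^\infty(\R\times[0,T])$ under $\norm{\cdot}_{Y_+(T)}$.

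First I would bound the three pieces of $\norm{u}_{Y_+(T)}$. For the $\sup_t \norm{u(\cdot,t)}_{D(T)}$ piece, fix $t$ and $x$, write $\abs{u(x+2s,t)} \le \abs{f(x+2s-t)} + \int_0^t \abs{G(x+2s-t+\tau,\tau)}\d\tau$, square, integrate $\d s$ over $[0,T]$, and use the triangle inequality in $L^2_s$: the first term contributes $\norm{f}_{D(T)}$ (after the change of variables $s \mapsto s$, noting the shift by $-t$ is harmless since the sup over $x$ absorbs it), and the second contributes exactly $\norm{G}_{N_+(T)}$ by definition of that norm, since $\int_0^t \abs{G(\cdot+\tau,\tau)}\d\tau$ evaluated along the line through $x+2s-t$ is controlled by $\bignorm{\int_0^T\abs{G(\cdot+\tau,\tau)}\d\tau}_{D(T)}$ (and $t \le T$ lets us extend the $\tau$-integral to $[0,T]$). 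For the $\mathcal X_+(T)$ piece one uses \eqref{IVPsola}: the function $p(y) := \abs{f(y)} + \int_0^T\abs{G(y+\tau,\tau)}\d\tau$ satisfies $\abs{u(x,t)}\le p(x-t)$ pointwise, and $\norm{p}_{D(T)} \le \norm{f}_{D(T)} + \norm{G}_{N_+(T)}$ by the triangle inequality in $D(T)$ applied to the defining expressions; hence $\norm{u}_{\mathcal X_+(T)} \le \norm{f}_{D(T)} + \norm{G}_{N_+(T)}$. For the $X_+(T)$ piece, evaluate $u$ along the characteristic $t \mapsto (x-t,t)$: then $u(x-t,t) = f(x-2t) + i\int_0^t G(x-2t+s,s)\d s$, and $\bigl(\int_0^T \abs{u(x-t,t)}^2\d t\bigr)^{1/2}$ splits into a term equal to $\norm{f}_{D(T)}$ (via $t\mapsto 2t$ or directly recognizing the $D(T)$ integrand with the sup over $x$) plus a term bounded by $\norm{G}_{N_+(T)}$, again by Minkowski's integral inequality. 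Summing the three contributions gives $\norm{u}_{Y_+(T)} \le 3\norm{f}_{D(T)} + 3\norm{G}_{N_+(T)}$, and $v$ is handled identically with $x+t$ replacing $x-t$ and $F$ replacing $G$.

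The main obstacle is bookkeeping the change-of-variables and the shifts carefully so that each appearance of $f$ genuinely reproduces the $D(T)$ norm rather than something slightly larger — the identities \eqref{KeyIdentity1} are exactly the clean statements of this, and the point is that the sup over $x\in\R$ in the definitions of $X_+(T)$, $\mathcal X_+(T)$ and $D(T)$ is translation-invariant, so an arbitrary spatial shift depending on $t$ can be absorbed, and the temporal parametrization $x\pm t$ matches the $x+2s$ parametrization of $D(T)$ after rescaling $s$. One must also be mildly careful that the inner integral $\int_0^t$ can be replaced by $\int_0^T$ at the cost of nothing (the integrand is nonnegative), which is what lets the forcing contribution be expressed uniformly in $t$ as the single quantity $\norm{G}_{N_+(T)}$; and when passing from smooth compactly supported data to the general case one checks that the right-hand sides $\norm{f}_{D(T)}+\norm{G}_{N_+(T)}$ control the full $Y_+(T)$ norm, so a Cauchy sequence in the data produces a Cauchy sequence of solutions in $Y_+(T)$, whose limit solves \eqref{IVP} in the sense of distributions by continuity of the (distributional) transport operator $\partial_t+\partial_x$.
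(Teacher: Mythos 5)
Your proposal is correct and follows essentially the same route as the paper: write the solution explicitly via characteristics, bound each of the three components of the $Y_\pm(T)$ norm using the pointwise bounds \eqref{IVPsol}, the identities \eqref{KeyIdentity1} together with translation invariance of the $D(T)$ norm, and conclude by density. The paper's proof is just a terser version of the same computation.
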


\begin{proof}
By density we may assume that $f,g \in C_c^\infty(\R)$ and $F,G \in C_c^\infty(\R \times [0,T])$. Applying the identities \eqref{KeyIdentity1} to \eqref{IVPsol} and using the translation invariance of the $D(T)$ norm, we immediately obtain the claimed estimates.
\end{proof}

For later use we also note that by Minkowski's integral inequality,
\begin{equation}\label{Nineq}
 \norm{u}_{N_\pm(T)} \le \int_0^T \norm{u(\cdot,s)}_{D(T)} \, ds \le T\norm{u}_{Y_{\pm'}(T)}
\end{equation}
for any combination of signs $\pm$ and $\pm'$.

\section{Estimates for the Dirac spinor}\label{DS}

To estimate linear, quadratic and cubic terms in $u$ and $v$ in the $N_\pm(T)$ norms, we use the following key result.

\begin{lemma}\label{Lemma3} Let $u,u' \in Y_+(T)$ and $v,v' \in \times Y_-(T)$. We have the estimates
\begin{align*}
  \norm{v v' u}_{N_+(T)} &\le \norm{v}_{X_-(T)} \norm{v'}_{X_-(T)} \norm{u}_{\mathcal X_+(T)},
  \\
  \norm{u u' v}_{N_-(T)} &\le \norm{u}_{X_+(T)} \norm{u'}_{X_+(T)} \norm{v}_{\mathcal X_-(T)}
\end{align*}
and
\begin{align*}
  \norm{v v'}_{N_+(T)} &\le \sqrt{T} \norm{v}_{X_-(T)} \norm{v'}_{X_-(T)},
  \\
  \norm{v u}_{N_+(T)} &\le \sqrt{T} \norm{v}_{X_-(T)} \norm{u}_{\mathcal X_+(T)},
  \\
  \norm{u u'}_{N_-(T)} &\le \sqrt{T} \norm{u}_{X_+(T)} \norm{u'}_{X_+(T)},
  \\
  \norm{u v}_{N_-(T)} &\le \sqrt{T} \norm{u}_{X_+(T)} \norm{v}_{\mathcal X_-(T)}
\end{align*}
and
\begin{align*}
  \norm{v}_{N_+(T)} &\le T \norm{v}_{X_-(T)},
  \\
  \norm{u}_{N_-(T)} &\le T \norm{u}_{X_+(T)}.
\end{align*}
\end{lemma}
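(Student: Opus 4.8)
The plan is to unwind every one of these bounds by the same two-move strategy: first rewrite the $N_\pm(T)$ norm using its definition and the translation invariance of the $D(T)$ norm, and then recognise that along the relevant family of characteristics the time integral factors exactly into the $X_\pm(T)$ and $\mathcal X_\pm(T)$ quantities. Let me illustrate with the trilinear estimate $\norm{v v' u}_{N_+(T)} \le \norm{v}_{X_-(T)} \norm{v'}_{X_-(T)} \norm{u}_{\mathcal X_+(T)}$. By density we may assume all functions are smooth with compact support. Fix $x \in \R$. By definition,
\[
  \int_0^T \Abs{ \int_0^T \abs{(v v' u)(x + 2\sigma + s, s)} \, ds }^2 \, d\sigma
\]
— wait, more carefully: $\norm{G}_{N_+(T)} = \bignorm{\int_0^T \abs{G(\cdot + s,s)} \, ds}_{D(T)}$, so with $G = vv'u$ we must bound $\sup_x \int_0^T \left( \int_0^T \abs{(vv'u)(x + 2\sigma + s,s)} \, ds \right)^2 d\sigma$. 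Inside, write $\abs{u(y,s)} \le p(y-s)$ for an admissible $p$; then the $s$-integrand becomes $\abs{v(x+2\sigma+s,s)}\abs{v'(x+2\sigma+s,s)}\, p(x+2\sigma)$, so $p(x+2\sigma)$ comes out of the $s$-integral. Apply Cauchy–Schwarz in $s$ to the remaining $\int_0^T \abs{v}\abs{v'}\,ds$, but actually the cleaner route is: along the forward characteristic $s \mapsto (x+2\sigma+s, s)$ one has $x+2\sigma+s = (x+2\sigma+2s) - s$, so the point lies on the backward characteristic through $x+2\sigma+2s$... This is the bookkeeping that needs care, but the upshot is that $\int_0^T \abs{v(x+2\sigma+s,s)}\abs{v'(x+2\sigma+s,s)}\,ds \le \norm{v}_{X_-(T)}\norm{v'}_{X_-(T)}$ uniformly (by Cauchy–Schwarz and the definition of $X_-(T)$, since $v(y+s,s)$-type slices are what $X_-$ controls after the substitution), after which the outer integral $\int_0^T p(x+2\sigma)^2 \, d\sigma \le \norm{p}_{D(T)}^2$, and taking the infimum over admissible $p$ gives $\norm{u}_{\mathcal X_+(T)}^2$. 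Taking the supremum over $x$ finishes this case.

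The bilinear estimates follow the same pattern with one of the three factors removed: e.g. for $\norm{vv'}_{N_+(T)}$, after pulling the definitions apart one bounds the inner $s$-integral $\int_0^T \abs{v}\abs{v'}\,ds$ by $\norm{v}_{X_-(T)}\norm{v'}_{X_-(T)}$ as before, but now this is a constant in $\sigma$, so the outer $\int_0^T d\sigma$ just produces the factor $T$ (using $\norm{c}_{D(T)} = c\sqrt{T}$, hence $\norm{c}_{D(T)}^2 = c^2 T$), giving $\sqrt{T}\,\norm{v}_{X_-(T)}\norm{v'}_{X_-(T)}$. The mixed bilinear terms $\norm{vu}_{N_+(T)}$ and $\norm{uv}_{N_-(T)}$ are handled by putting the "wrong-characteristic" factor ($u$ in the $N_+$ norm, $v$ in the $N_-$ norm) into its $\mathcal X$ norm via the pointwise envelope $p$, which again pulls out of the $s$-integral, leaving a single factor $\int_0^T \abs{v(x+2\sigma+s,s)}\,ds \le \sqrt{T}\,\norm{v}_{X_-(T)}$ by Cauchy–Schwarz; the outer $\int_0^T d\sigma$ then contributes nothing new beyond what is already accounted for, and one lands on $\sqrt{T}\,\norm{v}_{X_-(T)}\norm{u}_{\mathcal X_+(T)}$. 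Finally the linear estimates $\norm{v}_{N_+(T)} \le T\norm{v}_{X_-(T)}$: here $\int_0^T \abs{v(x+2\sigma+s,s)}\,ds \le \sqrt{T}\,\norm{v}_{X_-(T)}$ by Cauchy–Schwarz (a constant in $\sigma$), and then $\bignorm{\sqrt{T}\norm{v}_{X_-(T)}}_{D(T)} = \sqrt{T}\cdot\sqrt{T}\,\norm{v}_{X_-(T)} = T\norm{v}_{X_-(T)}$.

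The step I expect to be the main obstacle — or at least the one demanding the most care — is getting the characteristic substitutions exactly right so that the $X_\pm(T)$ norms, which are defined as suprema over \emph{backward/forward} lines $\{(x\mp t, t)\}$, actually match the slices that appear after one expands the $N_+(T)$ (respectively $N_-(T)$) norm, whose defining integral runs over a \emph{different} family of lines $\{(\cdot + s, s)\}$ inside a $D(T)$ norm that itself involves yet another shift $\cdot \mapsto \cdot + 2\sigma$. Concretely, in the $N_+$ norm the relevant point is $(x + 2\sigma + s, s)$; writing its first coordinate as $w + s$ with $w = x + 2\sigma$ shows it lies on the forward characteristic emanating from $w$, which is exactly an $X_-$-type slice — so factors that are functions of $v$ or $v'$ naturally go into $X_-(T)$, while a factor of $u$, being constant along \emph{backward} characteristics up to the envelope $p(x+2\sigma)$, goes into $\mathcal X_+(T)$ and pulls out of the $ds$-integral. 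Once this dictionary is pinned down, every estimate is a one-line application of Cauchy–Schwarz (or Hölder) in the $s$-variable followed by the identity $\norm{\text{const}}_{D(T)} = \text{const}\cdot\sqrt{T}$ for the outer integral; no further structure is needed. I would present the trilinear case in full and then remark that the others are identical with the obvious modifications, to avoid six nearly verbatim repetitions.
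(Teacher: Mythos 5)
Your proof is correct and follows essentially the same route as the paper: bound the envelope $\abs{u(y,s)}\le p(y-s)$ so that it is constant along the lines $s\mapsto(w+s,s)$ appearing in $N_+(T)$, apply Cauchy--Schwarz in $s$ to the remaining factors to produce $X_-(T)$ norms, and finish with the $D(T)$ norm of $p$ (and the infimum over admissible $p$). The only cosmetic difference is that the paper proves just the cubic estimates and obtains the quadratic and linear ones by inserting constant factors $1$ via the identity $\norm{1}_{X_\pm(T)}=\norm{1}_{\mathcal X_\pm(T)}=\sqrt{T}$, whereas you verify each case directly.
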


\begin{proof}
We only prove the cubic estimates; the others follow by the identities \eqref{KeyIdentity2} with $c=1$. Consider the first cubic estimate. The left side is bounded by
\[
  \norm{ \int_0^T \abs{v} \abs{v'} \abs{u}(\cdot+s,s) \, ds }_{D(T)}
\]
Assuming $\abs{u(x,t)} \le p(x-t)$ we bound this by
\[
  \norm{ \left(\int_0^T \abs{v}\abs{v'}(\cdot+s,s) \, ds \right) p(\cdot) }_{D(T)}
  \le
  \norm{v}_{X_-(T)} \norm{v'}_{X_-(T)} \norm{p}_{D(T)},
\]
where we applied H\"older's inequality with respect to $s$. Taking the infimum over $p \in D(T)$ such that $\abs{u(x,t)} \le p(x-t)$ now yields the desired estimate. The proof of the other cubic estimate is similar.
\end{proof}

\begin{remark}
In the cubic estimates, the number of plus signs equals the number of minus signs, and in the quadratic estimates both signs must occur. This reflects the fact that the estimates are of null form type. We are not able to estimate $uu'$, $uu'v$ or $uu'u''$ in $N_+(T)$, nor $vv'$, $vv'u$ or $vv'v'''$ in $N_-(T)$. For the linear estimates, on the other hand, the signs are not really important, in view of \eqref{Nineq}.
\end{remark}

\section{Estimates for the electromagnetic field}\label{EM}

Applying D'Alembert's formula to the wave equations \eqref{MD'c} and \eqref{MD'd} with initial data \eqref{MDdata2} and \eqref{MDdata3} we have
\begin{multline}\label{A0}
  A_0(x,t) = \frac{a_0(x+t) + a_0(x-t)}{2} + \frac{a_1(x+t) - a_1(x-t)}{2}
  \\
  - \frac12 \int_0^t \int_{x-(t-s)}^{x+t-s} (\abs{u}^2+\abs{v}^2)(y,s) \, dy \, ds
\end{multline}
and
\begin{multline}\label{A1}
  A_1(x,t) = \frac{a_0(x+t) - a_0(x-t)}{2} + \frac{a_1(x+t) + a_1(x-t)}{2}
  - \frac12 \int_{x-t}^{x+t} E_0(y) \, dy
  \\
  + \frac12 \int_0^t \int_{x-(t-s)}^{x+t-s} (\abs{u}^2-\abs{v}^2)(y,s) \, dy \, ds.
\end{multline}
Thus
\begin{align}
  \label{Aplus}
  A_0 + A_1 &= A_+^{\mathrm{free}} - W\left(\abs{v}^2\right),
  \\
  \label{Aminus}
  A_0 - A_1 &= A_-^{\mathrm{free}} - W\left(\abs{u}^2\right),
\end{align}
where
\begin{align*}
  A_+^{\mathrm{free}}(x,t) &= a_0(x+t) + a_1(x+t) - \frac12 \int_{x-t}^{x+t} E_0(y) \, dy,
  \\
  A_-^{\mathrm{free}}(x,t) &= a_0(x-t) - a_1(x-t) + \frac12 \int_{x-t}^{x+t} E_0(y) \, dy
\end{align*}
and the operator $W$ is defined by
\[
  WF(x,t) =
  \int_0^t \int_{x-(t-s)}^{x+t-s} F(y,s) \, dy \, ds.
\]

We note the following estimates for the above operators.

\begin{lemma}\label{Lemma4}
Let $T > 0$, $(a_0,a_1,E_0) \in BC(\R;\R)$ and $u,v \in C([0,T];D(T))$. Then $A_\pm^{\mathrm{free}}$ and $W(uv)$ belong to $C([0,T];BC(\R;\R))$ and we have the estimates
\begin{align*}
  \norm{A_\pm^{\mathrm{free}}}_{L^\infty(\R \times [0,T])} &\le \norm{a_0}_{L^\infty} + \norm{a_1}_{L^\infty} + T \norm{E_0}_{L^\infty},
  \\
  \norm{W(uv)}_{L^\infty(\R \times [0,T])} &\le 2 \int_0^T \norm{u(\cdot,s)}_{D(T)}\norm{v(\cdot,s)}_{D(T)} \, ds.
\end{align*}
Moreover, higher regularity persists. That is, if $(a_0,a_1,E_0) \in BC^N(\R;\R)$ for some $N \in \N$, and $\partial_t^j \partial_x^k(u,v) \in C([0,T];D(T))$ for $j = 0,1$ and $k = 0,1,\dots,N-j$, then it follows that $\partial_t^j \partial_x^k A_\pm^{\mathrm{free}}$ and $\partial_t^j \partial_x^k W(uv)$ belong to $C([0,T];BC(\R;\R))$ for such $j,k$, and we have estimates analogous to the above.
\end{lemma}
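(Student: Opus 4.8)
\noindent\emph{Plan of proof.}
The bound for $A_\pm^{\mathrm{free}}$ is immediate from the explicit formulas: since $\bigabs{\int_{x-t}^{x+t} E_0(y)\,dy} \le 2t\norm{E_0}_{L^\infty} \le 2T\norm{E_0}_{L^\infty}$, applying the triangle inequality to $A_+^{\mathrm{free}}(x,t) = a_0(x+t) + a_1(x+t) - \tfrac12\int_{x-t}^{x+t} E_0(y)\,dy$, and similarly for $A_-^{\mathrm{free}}$, gives the stated estimate. Continuity in $(x,t)$ is clear from the same formulas, the integral term being even jointly Lipschitz, so $A_\pm^{\mathrm{free}} \in C([0,T];BC(\R;\R))$. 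For $W$ I would fix $0 \le t \le T$ and estimate pointwise: from $\abs{W(uv)(x,t)} \le \int_0^t \int_{x-(t-s)}^{x+t-s} \abs{u(y,s)}\abs{v(y,s)}\,dy\,ds$, the Cauchy--Schwarz inequality in $y$ on each time slice bounds the right side by $\int_0^t \norm{u(\cdot,s)}_{L^2(I_s)}\norm{v(\cdot,s)}_{L^2(I_s)}\,ds$, where $I_s = [x-(t-s),x+(t-s)]$ is an interval of length $2(t-s) \le 2T$. Since $I_s$ is contained in an interval of the form $[a,a+2T]$, the inequality \eqref{f2} gives $\norm{u(\cdot,s)}_{L^2(I_s)} \le 2^{1/2}\norm{u(\cdot,s)}_{D(T)}$, and likewise for $v$; integrating in $s$ yields the claimed bound $\norm{W(uv)}_{L^\infty(\R\times[0,T])} \le 2\int_0^T \norm{u(\cdot,s)}_{D(T)}\norm{v(\cdot,s)}_{D(T)}\,ds$.

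To place $W(uv)$ in $C([0,T];BC(\R;\R))$ I would approximate $u$ and $v$ in $C([0,T];D(T))$ by functions in $C_c^\infty(\R\times[0,T])$ (a standard mollification argument, using that $D(T)$ is the completion of $C_c^\infty(\R)$); for such smooth data $W(uv)$ is manifestly smooth, hence in $C([0,T];BC(\R;\R))$, and since $(u,v)\mapsto W(uv)$ is bilinear and, by the estimate just proved, continuous into $(L^\infty(\R\times[0,T]),\norm{\cdot}_{L^\infty})$, in which $C([0,T];BC(\R;\R))$ sits as a closed subspace, the general case follows by passing to the limit. (A direct equicontinuity argument for the increment $W(uv)(\cdot,t')-W(uv)(\cdot,t)$, split into a thin cone slab plus two lateral strips and estimated via \eqref{f2}--\eqref{f3} together with a Dini-type uniformity of the $D(T)$ norm over the compact curve $\{(u(\cdot,s),v(\cdot,s))\}$, also works, but the density argument is cleaner.)

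For the persistence of higher regularity I would differentiate under the integral sign, using the identities $\partial_x^k W(F) = W(\partial_x^k F)$ and $\partial_t W(F)(x,t) = \int_0^t \bigl[F(x+(t-s),s) + F(x-(t-s),s)\bigr]\,ds$ (and, if convenient, $\square W(F) = 2F$ with $W(F)\init = \partial_t W(F)\init = 0$). Purely spatial derivatives are then handled exactly as above: expand $\partial_x^k(uv)$ by the Leibniz rule and apply the bilinear estimate to each term, the top-order terms being controlled since $\partial_x^N(u,v) \in C([0,T];D(T))$. For derivatives involving a $\partial_t$ one first notes the one-dimensional Sobolev bound $\norm{g}_{L^\infty} \le C(T)\bigl(\norm{g}_{D(T)} + \norm{\partial_x g}_{D(T)}\bigr)$ (obtained from \eqref{f2} on an interval of length $2T$); under the hypothesis $\partial_x^j(u,v) \in C([0,T];D(T))$ for $j \le N$, this makes $\partial_x^j u,\partial_x^j v$ bounded and continuous on $\R\times[0,T]$ for $j \le N-1$, so for $k \le N-1$ the function $F = \partial_x^k(uv)$ is bounded and continuous, the characteristic traces in the formula for $\partial_t W(F)$ are well defined, and $\abs{\partial_t\partial_x^k W(uv)} \le 2T\norm{F}_{L^\infty} < \infty$. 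Higher mixed derivatives, and the continuity of all of them, follow by combining these ingredients, while the analogous statements for $A_\pm^{\mathrm{free}}$ are read off the explicit formulas once $a_0,a_1,E_0 \in BC^N(\R;\R)$.

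The two displayed estimates and the basic $L^\infty$ bounds are routine; the points requiring attention are the passage to $C([0,T];BC(\R;\R))$ (cleanest via the density/closedness argument) and the bookkeeping in the higher-regularity part. The key structural observation there is that each time derivative falling on $W(uv)$ must be ``paid for'' by converting one spatial derivative into an $L^\infty$ bound through the one-dimensional Sobolev embedding, which is exactly why the hypotheses allow one fewer spatial derivative for each time derivative taken ($k \le N-j$); I expect that bookkeeping to be the main effort in the proof.
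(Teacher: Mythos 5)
Your proposal is correct, and since the paper explicitly omits the proof of Lemma \ref{Lemma4} as ``easy'', the only available point of comparison is the remark following the lemma, which flags the identities needed for the higher-regularity part. Your treatment of the two displayed estimates (triangle inequality for $A_\pm^{\mathrm{free}}$; Cauchy--Schwarz on each slice of the backward cone plus \eqref{f2} for $W(uv)$, with the interval $I_s$ of length $2(t-s)\le 2T$) is surely the intended argument, and your density/closedness route to $C([0,T];BC(\R;\R))$ is sound. Where you genuinely diverge is in the time-derivative bookkeeping: the paper's remark suggests writing $\partial_t\partial_x^k W(F)=W(\partial_t\partial_x^kF)+\int_{x-t}^{x+t}\partial_x^kF(y,0)\,dy$ (the stated factor $\tfrac12$ on the boundary term appears to be a typo, as one checks with $F\equiv 1$, consistent with $\square W(F)=2F$), which uses the hypothesis $\partial_t\partial_x^k(u,v)\in C([0,T];D(T))$ directly and reduces everything to the same bilinear $W$-estimate plus a Cauchy--Schwarz bound on the boundary term. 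You instead use the characteristic-trace formula $\partial_tW(F)=\int_0^t[F(x+t-s,s)+F(x-t+s,s)]\,ds$ and pay for the trace with the one-dimensional Sobolev bound $\norm{g}_{L^\infty}\le C(T)(\norm{g}_{D(T)}+\norm{\partial_xg}_{D(T)})$; this is valid under the stated hypotheses (it consumes one extra spatial derivative, which is exactly what $k\le N-j$ provides, and in fact never uses the assumption on $\partial_t\partial_x^k(u,v)$), but it yields bounds phrased through $L^\infty$ norms rather than bounds ``analogous to the above'' purely in terms of $D(T)$ norms of the hypothesised derivatives. Both routes close; the paper's suggested identity is marginally cleaner because it recycles the bilinear estimate verbatim.
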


We omit the easy proof, but remark that for the higher regularity one needs the identities
\begin{align*}
  \partial_x^k W(F) &= W(\partial_x^k F),
  \\
  \partial_t\partial_x^k W(F) &= W(\partial_t\partial_x^k F) + \frac12 \int_{x-t}^{x+t} \partial_x^k F(y,0) \, dy.
\end{align*}

\begin{remark}
The reason that we limit to one time derivative in the higher regularity statement is that for $j \ge 2$,
\begin{multline*}
  \partial_t^j\partial_x^k W(F) = W(\partial_t^j\partial_x^k F) + \frac12 \left( \partial_t^{j-2}\partial_x^k F(x+t,0) + \partial_t^{j-2}\partial_x^k F(x-t,0) \right)
  \\
  + \frac12 \int_{x-t}^{x+t} \partial_t^{j-1}\partial_x^k F(y,0) \, dy,
\end{multline*}
and terms like $\partial_t^{j-2}\partial_x^k (fg)$ cannot be estimated in $L^\infty$ in terms of $\norm{f}_{D(T)}$ and $\norm{g}_{D(T)}$. However, higher time derivatives can be recovered from the equations \eqref{MD'} once we have $C^\infty$ regularity in space and $C^1$ in time.
\end{remark}

\section{Proof of Theorem \ref{Thm1}}\label{LWP}

Inserting \eqref{Aplus} and \eqref{Aminus} into \eqref{MD'a} and \eqref{MD'b} yields the following nonlinear and nonlocal Dirac equations:
\begin{align*}
  (\partial_t + \partial_x)u &= -imv + i \left( A_+^{\mathrm{free}} - W\left(\abs{v}^2\right) \right) u + 2i \abs{v}^2 u + 2i \re(u\overline v) v,
  \\
  (\partial_t - \partial_x)v &= -imu + i \left( A_-^{\mathrm{free}} - W\left(\abs{u}^2\right) \right) v + 2i \abs{u}^2 v + 2i \re(u\overline v) u.
\end{align*}

Iterating in $Y_+(T) \times Y_-(T)$, we get a unique local solution $(u,v)$ in that space if $T > 0$ satisfies \eqref{T2} and \eqref{T1}. This follows by a standard iteration argument (we omit the details) using Lemma \ref{Lemma2}, the linear and trilinear estimates from Lemma \ref{Lemma3}, the estimate (here we use \eqref{Nineq})
\[
  \norm{Au}_{N_\pm(T)} \le \norm{A}_{L^\infty(\R \times [0,T])} \norm{u}_{N_\pm(T)}
  \le \norm{A}_{L^\infty(\R \times [0,T])} \norm{u}_{Y_{\pm'}(T)},
\]
and the estimates in Lemma \ref{Lemma4}. Note also that Lemma \ref{Lemma4} implies the regularity \eqref{RegularityC} for the $A_\mu$. Then $(u,v,A_0,A_1)$ is a solution of \eqref{MD'a}--\eqref{MD'd}.

The iteration argument also gives continuous dependence on the data, and higher regularity persists. Thus, $(u,v,A_0,A_1)$ is a limit, in the regularity class \eqref{Regularity}, of $C^\infty$ solutions corresponding to an approximating sequence of $C^\infty$ data. In particular, it follows that the local charge conservation \eqref{LocalCharge} holds, hence also \eqref{LocalChargeBound} and \eqref{LocalCharge2}.

The electric field $E = \partial_x A_0 - \partial_t A_1$ is calculated directly from \eqref{A0} and \eqref{A1},
\begin{multline}\label{E}
  E(x,t) =
  - \int_0^t \abs{u(x+t-s,s)}^2 \, ds
  + \int_0^t \abs{v(x-t+s,s)}^2 \, ds
  \\
  + \frac12 \left( E_0(x+t) + E_0(x-t) \right).
\end{multline}
This shows that $E$ is continuous and enjoys the bound
\[
  \norm{E}_{L^\infty(\R \times [0,T])} \le \norm{E_0}_{L^\infty} + \norm{u}_{X_+(T)}^2 + \norm{v}_{X_-(T)}^2.
\]

From \eqref{A0} and \eqref{A1} we also calculate
\begin{multline*}
  \partial_t A_0(x,t) - \partial_x A_1(x,t)
  =
  - \int_0^t \abs{u(x+t-s,s)}^2 \, ds - \int_0^t \abs{v(x-t+s,s)}^2 \, ds
  \\
  + \frac12 \left( E_0(x+t) - E_0(x-t) \right),
\end{multline*}
so from the local form of conservation of charge \eqref{LocalCharge2} it follows that the Lorenz gauge condition is satisfied if the Gauss law \eqref{GaussLaw} is satisfied initially.

Finally, it remains to prove that if $f,g \in L^2(\R)$, then $u,v \in C([0,T];L^2(\R))$ and depend continuously on the data as maps into that space. To this end, we use the local charge bound \eqref{LocalChargeBound}. Considering two solutions $(u,v)$ and $(u',v')$, we estimate, for $0 \le s,t \le T \ll R$,
\begin{align*}
  &\norm{u(\cdot,s)-u'(\cdot,t)}_{L^2}
  \\
  &\quad\le
  \norm{u(\cdot,s)-u'(\cdot,t)}_{L^2(\abs{x} \le R)}
  +
  \norm{u(\cdot,s)}_{L^2(\abs{x} \ge R)}
  +
  \norm{u'(\cdot,t)}_{L^2(\abs{x} \ge R)},
  \\
  &\quad\le
  2\frac{R}{T}\norm{u(\cdot,s)-u'(\cdot,t)}_{D(T)}
  +
  \norm{f}_{L^2(\abs{x} \ge R-T)} + \norm{g}_{L^2(\abs{x} \ge R-T)}
  \\
  &\quad \qquad +
  \norm{f'}_{L^2(\abs{x} \ge R-T)} + \norm{g'}_{L^2(\abs{x} \ge R-T)}
    \\
  &\quad\le
  2\frac{R}{T}\norm{u(\cdot,s)-u'(\cdot,t)}_{D(T)}
  +
  2\left( \norm{f}_{L^2(\abs{x} \ge R-T)} + \norm{g}_{L^2(\abs{x} \ge R-T)} \right)
  \\
  &\quad \qquad +
  \norm{f-f'}_{L^2} + \norm{g-g'}_{L^2},
\end{align*}
where we used \eqref{LocalChargeBound} and \eqref{f3} to get the second inequality.

Applying this with $u'=0$ shows that $u(\cdot,t)$ is in $L^2$. Applying it with $u'=u$ shows that $t \mapsto u(\cdot,t)$ is continuous into $L^2$. Finally, applying it with $u' = u_n$, where $(u_n,v_n)$ is a solution that converges to $(u,v)$ in $C([0,T];D(T))$ and such that $(f_n,g_n) \to (f,g)$ in $L^2$, we conclude that the convergence $u_n \to u$ actually holds in $C([0,T];L^2(\R))$.

\section{Proof of Theorem \ref{Thm3}}\label{LWPQD}

This is similar to but easier than the proof of Theorem \ref{Thm1}, and uses the quadratic estimates in Lemma \ref{Lemma3} instead of the cubic ones. We omit the details.

\section{Proof of Theorem \ref{Thm2}}\label{GWP}

We prove existence on the time interval $[0,\infty)$; by the reflection $(x,t) \to (-x,-t)$ one can obtain the same result for negative times.

By the local existence result, Theorem \ref{Thm1}, it suffices to exhibit a priori bounds on $u(\cdot,t)$ and $v(\cdot,t)$ in $D(T)$, and on $A_\mu(\cdot,t)$ and $E(\cdot,t)$ in $L^\infty$.

For $A_0$ and $A_1$ we have, by the formulas \eqref{A0} and \eqref{A1}, and using the local charge bound \eqref{LocalChargeBound},
\begin{equation}\label{Abound}
  \norm{A_\mu(\cdot,t)}_{L^\infty} \le \norm{a_0}_{L^\infty} + \norm{a_1}_{L^\infty} + t \norm{E_0}_{L^\infty} + \frac{t}{2} \left( \norm{f}_{L^2}^2 + \norm{g}_{L^2}^2 \right).
\end{equation}
From the formula \eqref{E} for $E$ we get, invoking the local charge identity \eqref{LocalCharge2},
\begin{equation}\label{Ebound}
  \norm{E(\cdot,t)}_{L^\infty} \le  \norm{E_0}_{L^\infty} + \frac12 \left( \norm{f}_{L^2}^2 + \norm{g}_{L^2}^2 \right).
\end{equation}
To bound
\[
  \norm{u(\cdot,t)}_{D(T)}^2 + \norm{v(\cdot,t)}_{D(T)}^2
\]
we use Delgado's trick \cite{Delgado1978}, or rather a refinement of it due to Huh \cite{Huh2013}. The starting point is the observation that
\begin{equation}\label{DelgadoFails}
\begin{aligned}
  (\partial_t + \partial_x) \abs{u}^2 &= - 2m \im (u\overline v) + 4\re(u\overline v)\im(u\overline v),
  \\
  (\partial_t - \partial_x) \abs{v}^2 &= 2m \im (u\overline v) - 4\re(u\overline v)\im(u\overline v),
\end{aligned}
\end{equation}
hence
\begin{align*}
  (\partial_t + \partial_x) \abs{u}^2 &\le 2m \abs{u}\abs{v} + 4\abs{u}^2\abs{v}^2,
  \\
  (\partial_t - \partial_x) \abs{v}^2 &\le 2m \abs{u}\abs{v} + 4\abs{u}^2\abs{v}^2.
\end{align*}
Multiplying by integrating factors $e^{-\phi_+}$ and $e^{-\phi_-}$, where
\begin{align*}
  \phi_+(x,t) &= \int_0^t 4\abs{v(x-t+s,s)}^2 \, ds,
  \\
  \phi_-(x,t) &= \int_0^t 4\abs{u(x+t-s,s)}^2 \, ds,
\end{align*}
we get
\begin{align*}
  (\partial_t + \partial_x) (e^{-\phi_+}\abs{u}^2) &\le 2m \abs{u}\abs{v} e^{-\phi_+},
  \\
  (\partial_t - \partial_x) (e^{-\phi_-}\abs{v}^2) &\le 2m \abs{u}\abs{v} e^{-\phi_-}.
\end{align*}
By the local charge conservation \eqref{LocalCharge2},
\[
  \norm{\phi_\pm}_{L^\infty} \le 2M,
\]
where
\[
  M=\norm{f}_{L^2(\R)}^2 + \norm{g}_{L^2(\R)}^2.
\]
Integration along characteristics therefore yields
\begin{align*}
  \abs{u(x,t)}^2 &\le \abs{f(x-t)}^2 + 2me^{4M} \int_0^t \abs{u}\abs{v}(x-t+s,s) \, ds,
  \\
  \abs{v(x,t)}^2 &\le \abs{g(x+t)}^2 + 2me^{4M} \int_0^t \abs{u}\abs{v}(x+t-s,s) \, ds.
\end{align*}
Estimating $2\abs{u}\abs{v} \le \abs{u}^2 + \abs{v}^2$ we then get
\begin{align*}
  \norm{u(\cdot,t)}_{D(T)}^2 &\le \norm{f}_{D(T)}^2 + me^{4M} \int_0^t \left( \norm{u(\cdot,s)}_{D(T)}^2 + \norm{v(\cdot,s)}_{D(T)}^2 \right) \, ds,
  \\
  \norm{v(\cdot,t)}_{D(T)}^2 &\le \norm{g}_{D(T)}^2 + me^{4M} \int_0^t \left( \norm{u(\cdot,s)}_{D(T)}^2 + \norm{v(\cdot,s)}_{D(T)}^2 \right) \, ds.
\end{align*}
Adding these and applying Gr\"onwall's lemma gives
\begin{equation}\label{Dbound}
  \norm{u(\cdot,t)}_{D(T)}^2 + \norm{v(\cdot,t)}_{D(T)}^2
  \le
  \left( \norm{f}_{D(T)}^2 + \norm{g}_{D(T)}^2 \right) \exp( 2me^{4M} t).
\end{equation}
Combining this with \eqref{Abound} and \eqref{Ebound} we have the all the a priori bounds needed to get global existence.

In fact, given any large target time $\tau > 0$, then by Lemma \ref{Lemma1} there exists $T_0 > 0$ such that for all $T \in (0,T_0]$,
\[
  \left( \norm{f}_{D(T)}^2 + \norm{g}_{D(T)}^2 \right) \exp( 2me^{4M} \tau )\le \varepsilon_0.
\]
Choosing $T \in (0,T_0]$ so small that
\[
  T\bigl( m + 2\norm{a_0}_{L^\infty} + 2\norm{a_1}_{L^\infty} +  2\tau\norm{E_0}_{L^\infty} +\tau M \bigr) \le \frac{\varepsilon_0}{2},
\]
we can then repeatedly apply Theorem \ref{Thm1} on successive time intervals $[0,T]$, $[0,2T]$ etc., to get existence beyond the time $\tau$.

\begin{remark}
If the coupling to the Gross-Neveu self-interaction is turned off (that is, $\lambda_3 = 0$), so that we only have the Maxwell-Dirac-Thirring equations, then the quadrilinear terms in \eqref{DelgadoFails} are not present and the proof of the a priori bound simplifies, as it is not necessary to use integrating factors. In other words, Delgado's trick works directly. The exponential factor in \eqref{Dbound} then simplifies to $\exp(2mt)$.
\end{remark}

\bibliographystyle{amsplain}
\bibliography{database}

\end{document}